\documentclass[10pt]{amsart}

\usepackage{amsfonts,amssymb,amsmath,amscd,amstext}
\usepackage[colorlinks=true,linkcolor=blue,citecolor=blue]{hyperref}
\usepackage[utf8]{inputenc}
\usepackage{microtype}
\usepackage{graphicx}
\usepackage{changes}
\usepackage{comment}

\renewcommand{\leq}{\leqslant}

\renewcommand{\le}{\leqslant}
\renewcommand{\ge}{\geqslant}
\newcommand{\ptl}{\partial}
\newcommand{\hhh}{{\mathcal{H}}}
\newcommand{\norm}[1]{|| #1 ||}

\newcommand{\rr}{{\mathbb{R}}}

\newcommand{\hh}{{\mathbb{H}}}

\newcommand{\sph}{{\mathbb{S}}}
\newcommand{\la}{\lambda}

\newcommand{\sg}{\sigma}
\newcommand{\Om}{\Omega}
\newcommand{\eps}{\varepsilon}

\newcommand{\ga}{\gamma}
\newcommand{\Ga}{\Gamma}

\newcommand{\escpr}[1]{\langle#1\rangle}

\newcommand{\mh}{\mathcal{H}}

\DeclareMathOperator{\divv}{div}
\DeclareMathOperator{\intt}{int}

\newtheorem{theorem}{Theorem}[section]
\newtheorem{proposition}[theorem]{Proposition}
\newtheorem{lemma}[theorem]{Lemma}
\newtheorem{corollary}[theorem]{Corollary}

\theoremstyle{definition}

\newtheorem{remark}[theorem]{Remark}

\newtheorem{definition}[theorem]{Definition} 

\theoremstyle{remark}

\numberwithin{equation}{section}

\definechangesauthor[name=Gianmarco, color=blue]{G}
\definechangesauthor[name=Manuel, color=purple]{M}

\begin{document}

\title[Regularity of Lipschitz boundaries]{Regularity of Lipschitz boundaries with prescribed sub-Finsler mean curvature in the Heisenberg group $\hh^1$}

\begin{abstract}
For a strictly convex set $K\subset\rr^2$ of class $C^2$ we consider its~associated sub-Finsler $K$-perimeter $|\ptl E|_K$ in $\hh^1$ and the prescribed mean curvature functional $|\ptl E|_K-\int_E f$ associated to a function $f$. Given a critical set for this functional with Euclidean Lipschitz and intrinsic regular boundary, we prove that their characteristic curves are of class $C^2$ and that this regularity is optimal. The result holds in particular when the boundary of $E$ is of class~$C^1$.
\end{abstract}

\author[G.~Giovannardi]{Gianmarco Giovannardi}
\address{Departamento de Geometría y Topología \\
Universidad de Granada \\ E--18071 Granada \\ Spain}
\email{giovannardi@ugr.es}

\author[M.~Ritoré]{Manuel Ritoré} 
\address{Departamento de Geometría y Topología \\
Universidad de Granada \\ E--18071 Granada \\ Spain}
\email{ritore@ugr.es}

\date{\today}

\thanks{Both authors have been supported by MEC-Feder
	grant MTM2017-84851-C2-1-P, Junta de Andalucía grant A-FQM-441-UGR18, MSCA GHAIA, and Research Unit MNat SOMM17/6109. The first author has also been supported by INdAM-GNAMPA project: ``Convergenze variazionali per funzionali e operatori dipendenti da campi vettoriali''}
\subjclass[2000]{53C17, 49Q10}
\keywords{Prescribed mean curvature; Heisenberg group; sub-Finsler structure; sub-Finsler perimeter; regularity of characteristic curves}

\bibliographystyle{abbrv} 

\maketitle

\thispagestyle{empty}

\section{Introduction}

The aim of this paper is to study the regularity of the characteristic curves~of the boundary of a set with continuous prescribed mean curvature in the first Heisenberg group $\hh^1$ with a sub-Finsler structure. Such a structure is defined by means of an asymmetric left-invariant norm $\norm{\cdot}_K$ in $\hh^1$  associated to a convex set $K\subset\rr^2$ containing $0$ in its interior, see \cite{2020arXiv200704683P}. We assume in this paper that $K$ has $C^2$ boundary with positive geodesic curvature. 

Following De Giorgi \cite{MR62214}, the authors of \cite{2020arXiv200704683P} defined a notion of sub-Finsler~$K$-perimeter, see also \cite{monti-finsler}. Given a measurable set $E\subset\hh^1$ and  an open subset $\Om\subset\hh^1$, it is said that $E$ has locally finite $K$-perimeter in $\Om$ if for any relatively compact open set $V\subset\Om$ we have
\[
|\ptl E|_K(V)=\sup\bigg\{\int_E\divv(U)\,d\hh^1: U\in\hhh_0^1(V), \norm{U}_{K,\infty}\le 1\bigg\}<+\infty,
\]
where $\hhh_0^1(V)$ is the space of horizontal vector fields of class $C^1$ with compact support in $V$, and $\norm{U}_{K,\infty}=\sup_{p\in V} \norm{U_p}_K$. Both the divergence and the integral are computed with respect to a fixed left-invariant Riemannian metric $g$ on $\hh^1$. When $S=\ptl E\cap\Omega$ is a Euclidean Lipschitz surface the $K$-perimeter coincides with the area functional 
\[
A_K(S)=\int_{S}\norm{N_h}_{K,*}\,d\mathcal{H}^2,
\]
where $\mathcal{H}^2$ is the $2$-dimensional Hausdorff measure associated to the left-invariant Riemannian metric $g$, $N$ is the outer unit normal to $S$, defined $\mathcal{H}^2$-a.e on $S$, $N_h$ is the horizontal projection of $N$ to the horizontal distribution in $\hh^1$ and $\norm{\cdot}_{K,*}$ is the dual norm of $\norm{\cdot}_K$. 

We say that a set $E$ with Euclidean Lipschitz boundary has prescribed $K$-mean curvature $f\in C^0(\Om)$ if, for any bounded open subset $V\subset\Om$, $E$ is a critical point of the functional
\begin{equation*}
A_K(S\cap B) -\int_{E\cap B} f\,d\hh^1.
\end{equation*}
This notion extends the classical one in Euclidean space and the one introduced in \cite{MR3412382} for the sub-Riemannian area. We refer the reader to the introduction of \cite{MR3412382} for a brief historical account and references.

We say that a set $E$ has \emph{constant} prescribed $K$-mean curvature if there exists $\lambda\in\rr$ such that $E$ has prescribed $K$-mean curvature $\lambda$. In Proposition \ref{prop:isoprescribed} we consider a set $E$ with Euclidean Lipschitz boundary and positive $K$-perimeter. We show that if $E$ is a critical point of the $K$-perimeter for variations preserving the volume up to first order then $E$ has constant prescribed $K$-mean curvature on any open set $\Om$ avoiding the singular set $S_0$ and where $|\ptl E|_K(\Om)>0$. This result can be applied to isoperimetric regions in $\hh^1$ with Euclidean Lipschitz boundary.

The main result of this paper is Theorem \ref{th:main}, where we prove that the boundary $S$ of a set $E$ with  prescribed continuous $K$-mean curvature is foliated by horizontal characteristic curves of class $C^2$ in its regular part. The minimal assumptions we require for the boundary $S$ of $E$ are to be Euclidean Lipschitz and $\mathbb{H}$-regular. The result holds in particular when the boundary of $E$ is of class $C^1$. As we point out in Remark \ref{rk:PW}, $C^2$ regularity is optimal since the Pansu-Wulff shapes obtained in \cite{2020arXiv200704683P} have prescribed constant mean curvature and their boundaries are foliated by characteristic curves with the same regularity as that of $\ptl K$,  that may be just $C^2$. In the proof of the Theorem \ref{th:main} we exploit the first variation formula of the area following the arguments developed in \cite{MR3474402, MR3412382} and make use of the biLipschitz homeomorphism considered in \cite{MR3984100}. One of the main differences in our setting is that the area functional strongly depends on the inverse $\pi_K$ of the Gauss  map of $\partial K$. Therefore  the first variation of the area depends on the derivative of the map that describes the boundary $\partial K$. In order to use the bootstrap regularity argument in \cite{MR3474402, MR3412382}  we need to invert this map on the boundary $\partial K$, that is possible since the geodesic curvature of $\partial K$ is strictly positive, see Lemma~\ref{lemmaFK}. Moreover, the $C^2$ regularity of the characteristic curves implies that, on characteristic curves of a boundary with prescribed continuous $K$-mean curvature $f$, the ordinary differential equation 
\begin{equation}
\label{eq:*}
\tag{*}
\escpr{D_Z  \, \pi_K(\nu_h) , Z}=f,
\end{equation}
is satisfied. In this equation $\nu_h=N_h/|N_h|$ is the  classical sub-Riemannian horizontal unit normal, $Z$ is the unit characteristic vector field tangent to the characteristic curves and $D$ the Levi-Civita connection associated to the left-invariant Riemannian metric $g$ on $\hh^1$ . Equation \eqref{eq:*} was proved to hold for $C^2$ surfaces in \cite{2020arXiv200704683P}. For~regularity assumptions below $\hh$-regular and Euclidean Lipschitz, equation \eqref{eq:*} holds in a suitable weak sense, a result proved in \cite{MR2223801} for the sub-Riemannian area, when $K$ coincides with the unit disk  centered at $0$.

Moreover,  in Proposition \ref{prop:subKMCE} we stress that equation \eqref{eq:*} is equivalent to
\begin{equation}
\label{eq:**}
\tag{**}
 H_D=\kappa(\pi_K(\nu_h)) f,
\end{equation}
where $H_D=\escpr{D_Z \nu_h, Z}$ is the classical sub-Riemannian mean curvature introduced in \cite{MR2223801}  and $\kappa$ is the strictly positive Euclidean  curvature of the boundary $\partial K$. A key ingredient to obtain equation \eqref{eq:**} is Lemma~\ref{lm:diff}, that exploits the ideas of  Lemma~\ref{lemmaFK} in an intrinsic setting.

This manuscript is a natural continuation of the many recent papers concerning sub-Riemannian area minimizers \cite{MR1404326, MR2354992, MR2262784, MR2165405, MR2312336, MR2472175, MR2333095,  MR2223801, MR2401420,MR3048517,MR2609016,MR2435652, MR2358000, MR3412408, MR2583494, MR2235475,MR2177813,MR2402213, 2019arXiv190505131, MR4118581,MR3794892}. The sub-Riemannian perimeter functional is a particular case of the sub-Finsler functionals considered in this paper where the convex set is the unit disk $D$ centered at $0$. In the pioneering paper \cite{MR1404326} N.~Garofalo and D.M.~Nhieu showed  the existence of sets of  minimal perimeter in Carnot-Carath\'eodory spaces satisfying the doubling property and a Poincar\'e inequality. In \cite{MR2000099} Leonardi and Rigot showed the existence of isoperimetric sets in Carnot groups. However the optimal regularity of the critical points of these variational problems involving the sub-Riemannian area is not completely understood. Indeed, even in the sub-Riemannian Heisenberg group $\hh^1$ there are several examples of non-smooth area minimizers: S. D. Pauls in \cite{MR2225631} exhibited a solution of low regularity for the Plateau problem with smooth boundary datum; on the other hand  in \cite{MR2262784,MR2448649,2008.04027} the authors provided solutions of Bernstein's problem in  $\hh^1$ that are only Euclidean Lipschitz.

In \cite{MR676380}  P. Pansu conjectured that the boundaries of isoperimetric sets in $\hh^1$ are given by the surfaces now called Pansu's spheres,  union of all sub-Riemannian geodesics of a fixed curvature joining two point in the same vertical line. This conjecture has been solved only assuming \emph{a priori} some regularity of the minimizers of the area with constant prescribed mean curvature.  In \cite{MR2435652} the authors solved the conjecture assuming that the minimizers of the area are of class $C^2$, using the description of the singular set, the characterization of area-stationary surfaces, and the ruling property of constant mean curvature surfaces developed in \cite{MR2165405}. Hence the \emph{a priori} regularity hypothesis are central  to study the sub-Riemannian isoperimetric problem. Motivated by this issue, it was shown in \cite{MR2481053} that a $C^1$ boundary of a set with continuous  prescribed mean curvature is foliated by $C^2$ characteristic curves. Regularity results for Lipschitz viscosity solutions  of the minimal surface equation were obtained in \cite{MR2583494}. Furthermore, in \cite{MR3412382} the authors generalized the previous result when the boundary $S$ is immersed in a three-dimensional contact sub-Riemannian manifold. Finally M. Galli in \cite{MR3474402} improved the result in \cite{MR3412382} only assuming that the boundary $S$ is Euclidean Lipschitz and $\hh$-regular in the sense of \cite{MR1871966}. The Bernstein problem in $\hh^1$ with Euclidean Lipschitz regularity was treated by S. Nicolussi and F. Serra-Cassano \cite{MR3984100}. Partial solutions of the sub-Riemannian isoperimetric problem have been obtained assuming Euclidean convexity \cite{MR2548252}, or symmetry properties \cite{MR2386783,MR2898770,MR2402213,MR3412408}. An analogous sub-Finsler isoperimetric problem might be considered. Candidate solutions would be the Pansu-Wulff shapes considered in \cite{2020arXiv200704683P}. See \cite{2020arXiv200704683P,monti-finsler} for partial results in the sub-Finsler isoperimetric problem and \cite{snchez2017subfinsler} for earlier work.

We have organized this paper into several sections. In Section \ref{sc:preliminaries} we introduce  sub-Finsler norms in the first Heisenberg group $\hh^1$ and their associated sub-Finsler perimeter, the notion of $\hh$-regular surfaces, intrinsic Euclidean Lipschitz graphs and  the definition of sets with prescribed mean curvature. Moreover, at the end of  this section we prove Proposition \ref{prop:isoprescribed}. Section \ref{sc:main} is dedicated to the proof of the main Theorem \ref{th:main},  that ensures that the characteristic curves are $C^2$. Finally in Section \ref{sc:KMCE} we deal with the $K$-mean curvature equation, see Proposition \ref{prop:KMCE}  and Proposition \ref{prop:subKMCE}.
\section{Preliminaries}
\label{sc:preliminaries}
\subsection{The Heisenberg group}
\label{sc:heis}
We denote by $\mathbb{H}^1$ the first Heisenberg group, defined as the $3$-dimensional Euclidean space $\rr^3$ endowed with a product $*$ defined by 
\[
(x,y,t)*(\bar{x},\bar{y} ,\bar{t})=(x+\bar{x}, y+\bar{y},t+\bar{t}+ \bar{x}y-x\bar{y}).
\]
A basis of left invariant vector fields is given by 
\[
X=\dfrac{\partial}{\partial x} + y \dfrac{\partial}{\partial t}, \qquad Y=\dfrac{\partial}{\partial y} - x \dfrac{\partial}{\partial t}, \qquad T=\dfrac{\partial}{\partial t}.
\]
For $p \in \hh^1$, the left translation by $p$ is the diffeomorphism $L_p(q) =p*q$.
The horizontal distribution $\mathcal{H}$ is the planar distribution generated by $X$ and $Y$, that coincides with the kernel of the (contact) one-form $\omega=dt-y dx+x dy$.

We shall consider on $\mathbb{H}^1$ the left invariant Riemannian metric $g= \escpr{\cdot,\cdot}$, so that $\{X, Y, T\}$ is an orthonormal basis at every point, and let  $D$ be the Levi-Civita connection associated to the Riemannian metric $g$. 
The following relations can be easily computed 
\begin{equation}
\begin{aligned}
&D_X X=0, \quad & D_Y Y=0, \qquad \, & D_T T=0\\
&D_X Y=-T, \quad & D_X T=Y, \qquad & D_Y T=-X\\
&D_Y X=T, \quad & D_T X=Y, \qquad & D_T Y=-X.\\
\end{aligned}
\end{equation}
Setting $J(U)=D_U T$ for any vector field $U$ in $\hh^1$ we get $J(X)=Y$, $J(Y)=-X$ and $J(T)=0$. Therefore $-J^2$ coincides with the identity when restricted to the horizontal distribution.
 The Riemannian volume of a set $E$ is, up to a constant, the Haar measure of the group and is denoted by $|E|$. The integral of a function $f$ with respect to the Riemannian measure by $\int f\, d\hh^1$.

\subsection{The pseudo-hermitian connection} The pseudo-hermitian connection  $\nabla$ is the only affine connection satisfying the following properties:
\begin{enumerate}
\item $\nabla$ is a metric connection,
\item $\text{Tor}(U,V)=2 \escpr{J(U),V} T$ for all vector fields $U,V$.
\end{enumerate}
In the previous line the torsion tensor $\text{Tor}(U,V) $ is given by $\nabla_U V - \nabla_V U - [U,V]$.
From the above definition and the Koszul formula  it follows easily that $\nabla X=\nabla Y=0$ and $\nabla J=0$. For a general discussion about  the pseudo-hermitian connection see for instance \cite[§ 1.2]{MR2214654}. Given a curve $\gamma: I \to \hh^1$ we denote by ${\nabla}/{ds}$ the covariant derivatives induced by the pseudo-hermitian connection along $\gamma$.

\subsection{Sub-Finsler norms}
Given a convex set $K\subset\rr^2$ with $0\in\intt(K)$ an~ associated asymmetric norm $\norm{\cdot}$ in $\rr^2$, we define on $\hh^1$ a left-invariant norm $\norm{\cdot}_K$ on the horizontal distribution by means of the equality
\[
(\norm{fX+gY}_K)(p)=\norm{(f(p),g(p))},
\] 
for any $p\in\hh^1$. The dual norm is denoted by $\norm{\cdot}_{K,*}$. 

If  the boundary of $K$ is of class $C^\ell$, $\ell\ge 2$, and the geodesic curvature of $\ptl K$ is strictly positive, we say that $K$ is of class $C^\ell_+$. When $K$ is of class $C^2_+$, the outer Gauss map $N_K$ is a diffeomorphism from $\ptl K$ to $\sph^1$ and the map
\[
\pi_K(fX+gY)=N_K^{-1}\bigg(\frac{(f,g)}{\sqrt{f^2+g^2}}\bigg),
\]
defined for non-vanishing horizontal vector fields $U=fX+gY$, satisfies
\[
\norm{U}_{K,*}=\escpr{U,\pi_K(U)}.
\]
See \S~2.3 in \cite{2020arXiv200704683P}.

\subsection{Sub-Finsler perimeter}
Here we summarize some of the results contained in subsection 2.4 in \cite{2020arXiv200704683P}.

Given a convex set $K\subset\rr^2$ with $0\in\intt(K)$, the norm $\norm{\cdot}_K$ defines a perimeter functional: given a measurable set $E\subset\hh^1$ and  an open subset $\Om\subset\hh^1$, we say that $E$ has locally finite $K$-perimeter in $\Om$ if for any relatively compact open set $V\subset\Om$ we have
\[
|\ptl E|_K(V)=\sup\bigg\{\int_E\divv(U)\,d\hh^1: U\in\hhh_0^1(V), \norm{U}_{K,\infty}\le 1\bigg\}<+\infty,
\]
where $\hhh_0^1(V)$ is the space of horizontal vector fields of class $C^1$ with compact support in $V$, and $\norm{U}_{K,\infty}=\sup_{p\in V} \norm{U_p}_K$. The integral is computed with respect to the Riemannian measure $d\hh^1$ of the left-invariant Riemannian metric $g$. When $K=D$, the closed unit disk centered at the origin of $\rr^2$, the $K$-perimeter coincides with classical sub-Riemannian perimeter.

If $K,K'$ are bounded convex bodies containing $0$ in its interior, there exist~constants $\alpha,\beta>0$ such that
\[
\alpha \norm{x}_{K'}\le \norm{x}_K\le \beta \norm{x}_{K'},\quad \text{for all }x\in\rr^2,
\]
and it is not difficult to prove that
\begin{equation*}
\tfrac{1}{\beta} |\ptl E|_{K'}(V)\leq |\ptl E_K|(V)\leq \tfrac{1}{\alpha} |\ptl E|_{K'}(V).
\end{equation*}
As a consequence,  $E$ has locally finite $K$-perimeter if and only if it has locally finite $K'$-perimeter. In particular, any set with locally finite $K$-perimeter has locally finite sub-Riemannian perimeter.

Riesz Representation Theorem implies the existence of a $|\ptl E|_K$ measurable vector field $\nu_K$ so that for any horizontal vector field $U$ with compact support of class $C^1$ we have
\[
\int_\Om\divv(U)\,d\hh^1=\int_\Om\escpr{U,\nu_K}\,d|\ptl E|_K.
\]
In addition, $\nu_K$ satisfies $|\ptl E|_K$-a.e. the equality $\norm{\nu_K}_{K,*}=1$, where $\norm{\cdot}_{K,*}$ is the dual norm of $\norm{\cdot}_K$.  

Given two convex sets $K,K'\subset \rr^2$ containing $0$ in their interiors, we have the following representation formula for the sub-Finsler perimeter measure $|\partial E|_K$ and the vector field $\nu_K$
\begin{equation*}
|\partial E|_K=\norm{\nu_{K'}}_{K,*}|\partial E|_{K'},\quad \nu_{K}=\frac{\nu_{K'}}{\norm{\nu_{K'}}_{K,*}}.
\end{equation*}
Indeed, for the closed unit disk $D\subset\rr^2$ centered at $0$ we know that in the Euclidean Lipschitz case $\nu_D=\nu_h$ and $|N_h|=\norm{N_h}_{D,*}$ where $N$ is the \emph{outer} unit normal. Hence we have
\begin{equation*}
|\partial E|_K=\norm{\nu_h}_{K,*}d|\partial E|_D, \quad \nu_K=\frac{\nu_h}{\norm{\nu_h}_{K,*}}.
\end{equation*}
Here $|\partial E|_D$ is the standard sub-Riemannian measure. Moreover, $\nu_h=N_h/|N_h|$ and $|N_h|^{-1}d|\ptl E|_D=dS$, where $dS$ is the standard Riemannian measure on $S$. Hence we get, for a set $E$ with Euclidean Lipschitz boundary $S$
\begin{equation}
\label{eq:AKlipschitz}
|\ptl E|_K(\Om)=\int_{S\cap\Om}\norm{N_h}_{K,*}\,dS,
\end{equation}
where $dS$ is the Riemannian measure on $S$, obtained from the area formula using a local Lipschitz parameterization of $S$, see Proposition~2.14 in \cite{MR1871966}. It coincides with the $2$-dimensional Hausdorff measure associated to the Riemannian distance induced by $g$. We stress that here $N$ is the \emph{outer} unit normal. This choice is important because of the lack of symmetry of $\norm{\cdot}_K$ and $\norm{\cdot}_{K,*}$.

\subsection{Immersed surfaces in $\hh^1$}
\label{sc:surfaceinH}

Following \cite{MR2223801,MR1871966} we provide the following definition.
\begin{definition}[$\mathbb{H}$-regular surfaces]
A real measurable function $f$ defined on an open set $\Omega\subset \mathbb{H}^1$  is of class $C_{\mathbb{H}}^1(\Omega)$ if the distributional derivative $\nabla_{\mathbb{H}} f= (X f, Y f)$ is represented by a continuous function.

We say that $S \subset \mathbb{H}^1$ is an $\mathbb{H}$-regular surface if for each $p \in \mathbb{H}^1$ there exist a neighborhood  $U$ and a function $f \in C_{\mathbb{H}}^1(U)$ such that $\nabla_{\mathbb{H}} f \ne 0$ and $S \cap U=\{f=0\}$.
Then the continuous horizontal unit normal  is given by 
\[
\nu_h=\dfrac{\nabla_{\mathbb{H}} f}{|\nabla_{\mathbb{H}} f|}.
\]
\end{definition}

Given an oriented Euclidean Lipschitz surface $S$ immersed in $\hh^1$, its unit normal~$N$ is defined $\mathcal{H}^2$-a.e. in $S$, where $\mathcal{H}^2$ is the $2$-dimensional Hausdorff measure associated to the Riemannian distance induced by $g$.  In case $S$ is the boundary of a set $E \subset \hh^1 $, we always choose the outer unit normal. We say that a point $p$ belongs to the \textit{singular set} $S_0$  of $S$ if  $p \in S$ is a differentiable point  and the tangent space $T_pS$ coincides with the horizontal distribution $\mh_p$. Therefore the  horizontal projection of the normal $N_h$ at singular points vanishes. In $S \smallsetminus S_0$ the horizontal unit normal $\nu_h$ is defined $\mathcal{H}^2$-a.e. by
\[
\nu_h=\frac{N_h}{|N_h|},
\]
where $N_h$ is the horizontal projection of the normal $N$. The vector field $Z$ is defined $ \mathcal{H}^2$-a.e. on $S\smallsetminus S_0'$ by $Z=J(\nu_h)$, and it is tangent to $S$ and horizontal.

 $\hh$-regularity plays an important role in the regularity theory of sets of finite sub-Riemannian perimeter. In \cite{MR1871966}, B. Franchi, R. Serapioni and F. Serra-Cassano proved that the boundary of such a set is composed of $\hh$-regular surfaces and a singular set of small measure.

\subsection{Sets with prescribed mean curvature}
Consider an open set $\Om\subset M$, and an integrable function $f\in L^1_{loc}(\Om)$. We  say that a set of locally finite $K$-perimeter $E\subset\Om$ has \emph{prescribed $K$-mean curvature $f$ in $\Om$} if, for any bounded open set $B\subset\Om$, $E$ is a critical point of the functional
\begin{equation}
\label{eq:a-hv}
|\ptl E|_K(B)-\int_{E\cap B} f\,d\hh^1.
\end{equation}

If $S=\ptl E\cap\Omega$ is a Euclidean Lipschitz surface then $S$ has prescribed $K$-mean curvature $f$ if it is a critical point of the functional
\begin{equation}\label{eq:prescribedfunctional}
A_K(S\cap B) -\int_{E\cap B} f\,d\hh^1,
\end{equation}
for any bounded open set $B\subset\Om$.

If $E$ has boundary $S=\ptl E\cap\Om$ of class $C^2$, standard arguments imply that $E$ has prescribed $K$-mean curvature $f$ in $\Om$ if and only if $H_K=f$, where $H_K$ is the $K$-mean curvature
\[
H_K=\escpr{D_Z\pi_K(\nu_h),Z},
\]
and $\nu_h$ is the \emph{outer} horizontal unit normal, see \cite{2020arXiv200704683P}. Since by \cite [Lemma 2.1]{2020arXiv200704683P} the Levi-Civita connection $D$ and the pseudo-hermitian connection $\nabla$ coincide for horizontal vector fields, we obtain that 
\[
H_K=\escpr{D_Z\pi_K(\nu_h),Z}= \escpr{\nabla_Z\pi_K(\nu_h),Z}.
\]
It is important to remark that the mean curvature $H_K$ strongly depends on the choice of $\nu_h$. When $K$ is centrally symmetric, $\pi_K(-u)=-\pi_K(u)$ and so the mean curvature changes its sign when we take $-\nu_h$ instead of $\nu_h$. When $K$ is not centrally symmetric, there is no relation between the mean curvatures associated to $\nu_h$ and $-\nu_h$.

A set $E\subset\hh^1$ with Euclidean Lischiptz boundary has locally finite $K$-perimeter: we know that it has locally bounded sub-Riemannian perimeter by Proposition~2.14 in \cite{MR1871966} and we can apply the perimeter estimates in \S~2.3. Letting $\mathcal{H}^2$ be the Riemannian $2$-dimensional Hausdorff measure, the Riemannian  outer unit normal $N$ is defined $\mathcal{H}^2$-a.e. in $\ptl E$, and it can be proven that
\begin{equation}
\label{eq:lipsintform}
|\ptl E|_K(V)=\int_{\ptl E\cap V}\norm{N_h}_{K,*}\,d\mathcal{H}^2.
\end{equation}

We say that a set $E$ of locally finite $K$-perimeter in an open set $\Om\subset\hh^1$ has \emph{constant} prescribed $K$-mean curvature if there exists $\lambda\in\rr$ such that $E$ has prescribed $K$-mean curvature $\lambda$. This means that $E$ is a critical point of the functional $E\mapsto |\ptl E|_K(B)-\lambda|E\cap B|$ for any bounded open set $B\subset\Om$. 

Our next result implies that Euclidean Lipschitz isoperimetric boundaries (for the $K$-perimeter) have constant prescribed $K$-mean curvature.

\begin{proposition}
\label{prop:isoprescribed}
Let $E\subset\hh^1$ be a bounded set with Euclidean Lipschitz boundary. Assume that $E$ a critical point of the $K$-perimeter for variations preserving the volume of $E$ up to first order. Let $\Om\subset\hh^1$ be an open set so that $\Om\cap S_0=\emptyset$ and $|\ptl E|_K(\Om)>0$. Then $E$ has constant prescribed $K$-mean curvature in $\Om$.
\end{proposition}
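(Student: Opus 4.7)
The plan is to run a standard Lagrange multiplier argument to promote the volume-constrained critical point property into an unconstrained one for the functional $|\ptl E|_K(B)-\lambda|E\cap B|$, for a suitable $\lambda\in\rr$ that does not depend on $B\subset\Om$.

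First I would verify that there is a ``volume-producing'' vector field compactly supported in $\Om$. Since $|\ptl E|_K(\Om)>0$ and $\norm{N_h}_{K,*}$ is bounded above on $\ptl E$, the representation formula \eqref{eq:lipsintform} implies $\mathcal{H}^2(\ptl E\cap\Om)>0$. Because $\Om\cap S_0=\emptyset$, at $\mathcal{H}^2$-a.e.\ point $p\in\ptl E\cap\Om$ the outer unit normal $N(p)$ is defined and $N_h(p)\neq 0$. Pick such a regular point $p_0$, a small open ball $B_0\subset\Om$ around $p_0$, and a smooth vector field $U_0$ compactly supported in $B_0$ of the form $\rho N$ near $p_0$ (with $\rho\ge 0$ a bump); by the first variation of the Riemannian volume,
\[
\mathcal{V}(U_0):=\frac{d}{dt}\bigg|_{t=0}|\varphi^{U_0}_t(E)|=\int_{\ptl E}\escpr{U_0,N}\,d\mathcal{H}^2>0,
\]
where $\varphi^{U_0}_t$ denotes the flow of $U_0$.

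Next, for an arbitrary smooth vector field $U$ with compact support in $\Om$, I would form the volume-neutralizing perturbation
\[
U_*=U-\frac{\mathcal{V}(U)}{\mathcal{V}(U_0)}\,U_0,
\]
which is still compactly supported in $\Om$ and satisfies $\mathcal{V}(U_*)=0$. By hypothesis, any such variation is critical for the $K$-perimeter, so the first variation $\mathcal{P}_K(U_*)$ of $|\ptl E|_K$ vanishes. Using the linearity of the first variation in the generating vector field (which follows since both $\mathcal{V}$ and $\mathcal{P}_K$ are differentials at $t=0$ and hence linear in the infinitesimal generator), I obtain
\[
\mathcal{P}_K(U)=\lambda\,\mathcal{V}(U),\qquad \lambda:=\frac{\mathcal{P}_K(U_0)}{\mathcal{V}(U_0)}\in\rr,
\]
for every such $U$. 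The constant $\lambda$ is independent of the choice of $U_0$: if $U_0'$ is another admissible choice, applying the identity above with $U=U_0'$ gives $\mathcal{P}_K(U_0')/\mathcal{V}(U_0')=\lambda$.

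Finally, given any bounded open $B\subset\Om$ and any variation of $E$ supported in $B$ generated by a vector field $U$, the formula $\mathcal{P}_K(U)-\lambda\,\mathcal{V}(U)=0$ is precisely the first-order criticality of the functional $|\ptl E|_K(B)-\lambda|E\cap B|$ at $E$ (the boundary terms at $\ptl B$ do not appear because $U$ has compact support inside $B$). This is the definition of $E$ having constant prescribed $K$-mean curvature $\lambda$ in $\Om$. There is no genuine obstacle in this argument; the only technical point is ensuring the existence of a compactly supported test vector field with nonzero volume variation, which is exactly guaranteed by the assumptions $\Om\cap S_0=\emptyset$ and $|\ptl E|_K(\Om)>0$.
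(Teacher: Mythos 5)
Your overall strategy is exactly the paper's Lagrange multiplier argument: produce one field $U_0$ with nonzero volume derivative, neutralize the volume variation of an arbitrary $W$ by subtracting a multiple of $U_0$, apply criticality to the corrected field, and use linearity to get $\mathcal{P}_K(W)=\lambda\,\mathcal{V}(W)$. The structure is fine, but the step you dispose of in a parenthetical --- ``the linearity of the first variation \dots\ follows since both $\mathcal{V}$ and $\mathcal{P}_K$ are differentials at $t=0$ and hence linear in the infinitesimal generator'' --- is the only genuinely nontrivial point of the proposition, and the justification is not valid as stated. Being the derivative at $t=0$ of $t\mapsto A_K(\varphi^U_t(S))$ does not make $U\mapsto\mathcal{P}_K(U)$ linear: the assignment $U\mapsto\varphi^U_t$ is itself nonlinear, and for a merely Lipschitz boundary it is not even clear a priori that this two-sided derivative exists. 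The paper devotes Lemma~\ref{lem:1stderlipschitz} to precisely this: it writes $A_K(\varphi_s(S))=\int_S\big(\norm{(N_s)_h}_{K,*}\circ\varphi_s\big)\,\mathrm{Jac}(\varphi_s)\,d\mathcal{H}^2$, establishes a uniform bound on the incremental quotients of $\norm{(N_s)_h}_{K,*}$ (using Rademacher and the hypothesis $\Om\cap S_0=\emptyset$, so that $N_h\neq0$ where $\pi_K$ is differentiable), applies dominated convergence to differentiate under the integral sign, and only then reads off linearity in $U$ from the resulting explicit formula. Without this lemma, or an equivalent argument, your proof does not close.

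A smaller issue: your construction of $U_0$ as ``a smooth vector field of the form $\rho N$ near $p_0$'' does not quite work, because $N$ is only defined $\mathcal{H}^2$-a.e.\ and is in general discontinuous on a Lipschitz boundary, so $\rho N$ need not be smooth or even continuous. This is repairable (e.g.\ use the cone condition satisfied by the normal of a Lipschitz graph and take a constant vector field times a bump), but the paper sidesteps it entirely: since $|\ptl E|_K(\Om)>0$, the definition of the $K$-perimeter as a supremum of $\int_E\divv(U)\,d\hh^1$ over horizontal fields supported in $\Om$ directly yields a $U_0$ with $\int_E\divv(U_0)\,d\hh^1>0$, and by the divergence theorem this integral is exactly the first-order volume variation $\mathcal{V}(U_0)$. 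That route is cleaner and avoids any appeal to regular points of $\ptl E$.
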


\begin{proof}
Since the $K-$perimeter of $E$ in $\Om$ is positive there exists a horizontal vector field $U_0$ with compact support in $\Om$ so that $\int_{E}\divv U_0\,d\hh^1>0$.  Let $\{\psi_s\}_{s\in\rr}$ be the flow associated to $U_0$ and define
\begin{equation}
\label{eq:defH0}
H_0=\frac{\tfrac{d}{ds}|_{s=0}\, A_K(\psi_s(S))}{\tfrac{d}{ds}|_{s=0}|\psi_s(E)|}.
\end{equation}

Let $W$ any vector field with compact support in $\Om$ and associated flow $\{\varphi_s\}_{s\in\rr}$. Choose $\la\in\rr$ so that $W-\la U_0$ satisfies
\[
\frac{d}{ds}\bigg|_{s=0}|\varphi_s(E)|-\la\,\frac{d}{ds}\bigg|_{s=0}|\psi_s(E)|=0.
\]
This means that the flow of $W-\la U_0$ preserves the volume of $E$ up to first order. 

By our assumption on $E$ we get
\[
Q(W-\la U_0)=0,
\]
where $Q$ is defined in \eqref{eq:defQ}. Now Lemma~\ref{lem:1stderlipschitz} implies $Q(W)=\la Q(U_0)$ and, from the definition of $H_0$, we get
\[
Q(W)=\la Q(U_0)=\la H_0 \frac{d}{ds}\bigg|_{s=0}|\psi_s(E)|=H_0\frac{d}{ds}\bigg|_{s=0} |\varphi_s(E)|.
\]
This implies that $E$ is a critical point of the functional $E\mapsto |\ptl E|_K-H_0|E|$ and so it has prescribed $K$-mean curvature equal to the constant $H_0$.
\end{proof}

\begin{lemma}
\label{lem:1stderlipschitz}
Let $E\subset\hh^1$ be a bounded set with Euclidean Lipschitz boundary $S$. Let $\Om\subset\hh^1$ be an open set such that $\Om\cap S_0=\emptyset$. Let $U$ be a vector field with compact support $\Om$ and $\{\varphi_s\}_{s\in\rr}$ the associated flow. Then the derivative
\begin{equation}
\label{eq:defQ}
Q(U)=\frac{d}{ds}\bigg|_{s=0} A_K(\varphi_s(S))
\end{equation}
exists and is a linear function of $U$.
\end{lemma}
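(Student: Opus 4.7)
The strategy is to parametrize $S$ locally by Lipschitz graphs, rewrite $A_K(\varphi_s(S))$ as a parameter-dependent integral over the parameter domain, and differentiate under the integral sign. Since $\mathrm{supp}(U)\subset\Om$ is compact and $S$ is Euclidean Lipschitz with $S\cap\mathrm{supp}(U)\cap S_0=\emptyset$, I would cover $S\cap\mathrm{supp}(U)$ by finitely many Lipschitz graph charts $F_i\colon D_i\to V_i\subset S$ (each $F_i$ differentiable a.e.\ by Rademacher's theorem) and take a partition of unity $\{\chi_i\}$ subordinate to $\{V_i\}\cup\{S\smallsetminus\mathrm{supp}(U)\}$. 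Combining \eqref{eq:AKlipschitz} with the $1$-homogeneity identity $\|N_h\|_{K,*}\,|F_u\wedge F_v|=\|(F_u\wedge F_v)_h\|_{K,*}$ yields
\[
A_K(\varphi_s(S))=\sum_i\int_{D_i}(\chi_i\circ F_i)\,\|(w^i_s)_h\|_{K,*}\,du\,dv+C,
\]
where $w^i_s(u,v)=(\varphi_s\circ F_i)_u\wedge(\varphi_s\circ F_i)_v$ and $C$, the contribution from $S\smallsetminus\mathrm{supp}(U)$, is independent of $s$ because $\varphi_s$ is the identity there.

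For the differentiation, the key point is that $(w^i_0)_h\neq 0$ a.e.\ on $D_i$ thanks to the assumption $S_0\cap\Om=\emptyset$, while $K$ being of class $C^2_+$ makes $\|\cdot\|_{K,*}$ of class $C^2$ on $\rr^2\smallsetminus\{0\}$. Since $\varphi_s$ is smooth in $s$, at a.e.\ $(u,v)$ the integrand is $C^1$ in $s$ for small $|s|$, with
\[
\frac{d}{ds}\bigg|_{s=0}\|(w^i_s)_h\|_{K,*}=\escpr{\pi_K((w^i_0)_h),\,\tfrac{d}{ds}\big|_{s=0}(w^i_s)_h},
\]
by the identity $\|U\|_{K,*}=\escpr{U,\pi_K(U)}$ recalled in \S~2.3. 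Compactness of $\mathrm{supp}(U)$, smoothness of $\varphi_s$, and the local boundedness of the weak derivatives of $F_i$ provide an integrable majorant for $\partial_s$ of the integrand on $[-\delta,\delta]\times D_i$ for some $\delta>0$. Dominated convergence then gives the existence of $Q(U)$.

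Linearity of $Q$ in $U$ follows because both $\tfrac{d}{ds}|_{s=0}\varphi_s$ and $\tfrac{d}{ds}|_{s=0} d\varphi_s$ depend linearly on $U$ and its first derivatives; hence $\tfrac{d}{ds}|_{s=0}(w^i_s)_h$ is linear in $U$, and integrating preserves linearity. The main technical obstacle is the justification of differentiation under the integral sign: at points where $(w^i_s)_h$ vanishes, the norm $\|\cdot\|_{K,*}$ is merely Lipschitz, and this degeneracy is exactly what the hypothesis $\Om\cap S_0=\emptyset$ rules out. Without such a hypothesis one would need to work with a weaker notion of first variation.
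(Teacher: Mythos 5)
Your proposal is correct and follows essentially the same route as the paper: both pull the integral back to a fixed domain (the paper integrates over $S$ itself via the flow and the Jacobian $\text{Jac}(\varphi_s)$, you pull back further to Lipschitz chart domains), dominate the difference quotients of $\norm{\cdot}_{K,*}$ using its Lipschitz equivalence with the Euclidean norm and the smoothness of $\varphi_s$, invoke $\Om\cap S_0=\emptyset$ so that $N_h\neq 0$ and the identity $\norm{V}_{K,*}=\escpr{V,\pi_K(V)}$ can be differentiated, and deduce linearity from the linearity of the first variation of the normal in $U$. The differences are purely presentational.
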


\begin{proof}
For every $s\in\rr$, the set $\varphi_s(E)$ has Euclidean Lipschitz boundary and so it has finite $K$-perimeter. By Rademacher's Theorem, the set
\[
B=\{p\in S: S \text{ is not differentiable at }p\}
\]
has $\mathcal{H}^2$-measure equal to $0$.

For any $p\in S\setminus B$ we take the curve $\sg(s)=\varphi_s(p)$.  For every $s\in\rr$ the surface $\varphi_s(S)$ is differentiable at $\sg(p)$ and the vector field $W(s)=((N_s)_h)_{\sg(s)}$, where $N_s$ is the outer unit normal to $\varphi_s(\ptl E)$, is differentiable along the curve $\sg$. Let us estimate the quotient
\begin{equation}
\label{eq:incremental}
\frac{\norm{W(s+h)}_{K,*}-\norm{W(s)}_{K_*}}{h}.
\end{equation}
Writing $W(s)=f(s)X_{\sg(s)}+g(s)Y_{\sg(s)}$ we have $\norm{W(s)}_{K,*}=\norm{(f(s),g(s))}$, where $\norm{\cdot}$ is the planar asymmetric norm associated to the convex set $K$. We have 
\begin{align*}
\big|\norm{W(s+h)}_{K,*}-\norm{W(s)}_{K_*}\big|&\le\norm{(f(s+h)-f(s),g(s+h)-g(s))}
\\
&\le C\,\big(|f(s+h)-f(s)|+|g(s+h)-g(s)|\big),
\end{align*}
for a constant $C>0$ that only depends on $K$. The derivates of $f$ and $g$ can be estimated in terms of the covariant derivative $\tfrac{D}{ds}W=\tfrac{D}{ds}(N_s)_h$ along $\sg$. Since
\[
\bigg|\frac{D}{ds} (N_s)_h\bigg|\le \big|\divv_{\varphi_s(S)}(U)\big|
\]
we get an uniform estimate on the derivatives of $f$ and $g$ independent of $p$. So the quotient \eqref{eq:incremental} is uniformly bounded above by a constant independent of $p$.

To compute the derivative of $A_K(\varphi_s(S))$ at $s=0$ we write
\[
A_K(\varphi_s(S))=\int_{S}\big(\norm{(N_s)_h}_{K,*}\circ\varphi_s\big)\,\text{Jac}(\varphi_s)\,d\mathcal{H}^2
\]
The uniform estimate of the quotient \eqref{eq:incremental} allows us to apply Lebesgue's dominated convergence theorem and Leibniz's rule to compute the derivative of $A_K(\varphi_s(S))$, given by
\[
\int_S\frac{d}{ds}\bigg|_{s=0}\bigg(\big(\norm{(N_s)_h}\circ\varphi_s\big)\,\text{Jac}(\varphi_s)\bigg)\,d\mathcal{H}^2.
\]

Given a point $p\in (S\smallsetminus B)\cap\text{supp}(U)$,  since $\text{supp}(U)\subset\Om$ and $\Om\cap S_0=\emptyset$ we get $(N_h)_p\neq 0$ and so
\begin{align*}
\frac{D}{ds}\bigg|_{s=0} \norm{(N_s)_h}_{K,*}&(\sg(s))=\frac{D}{ds}\bigg|_{s=0} \escpr{(N_s)_h,\pi_K((N_s)_h)}(\sg(s)) \\
&=\escpr{\frac{D}{ds}\bigg|_{s=0} (N_s)_h,(N_h)_p}+\escpr{(N_h)_p,(d\pi_K)\bigg(\frac{D}{ds}\bigg|_{s=0} (N_s)_h\bigg)}.
\end{align*}
Since
\[
\frac{D}{ds}\bigg|_{s=0} (N_s)_h=\frac{D}{ds}\bigg|_{s=0} N-\escpr{\frac{D}{ds}\bigg|_{s=0} N,T}\,T,
\]
and
\[
\frac{D}{ds}\bigg|_{s=0} N=\sum_{i=1}^2 \escpr{N_p,\nabla_{e_i}U}\,e_i,
\]
where $e_i$ is an orthonormal basis of $T_p(\ptl E)$, we get that
\[
\frac{D}{ds}\bigg|_{s=0} \norm{N_s}_{K,*}
\]
is a linear function $L(U)$ of $U$.
\end{proof}

\begin{remark}
Proposition~\ref{prop:isoprescribed} can be applied to isoperimetric regions in $\hh^1$ with Euclidean Lipschitz boundary. Of course, the regularity of isoperimetric regions in $\hh^1$ is still an open problem.
\end{remark} 

\subsection{Intrinsic Euclidean Lipschitz graphs on a vertical plane in $\hh^1$}
\label{sc:intgraph}
We denote by $\text{Gr}(u)$ the \textit{intrinsic} graph (Riemannian normal graph) of the Lipschitz function $u: D \to \rr,$ where $D$ is a domain in a vertical plane. Using Euclidean rotations about the vertical axis $x=y=0$, that are isometries of the Riemannian metric $g$, we may assume that $D$ is contained in the plane $y=0$. Since the~vector field $Y$ is a unit normal to this plane, the intrinsic graph $\text{Gr}(u)$ is given by $\{ \exp_p (u(p) Y_p)  : p \in D\}$, where $\exp$ is the exponential map of $g$, and can be parameterized by the map
\[
\Phi^u (x,t)=(x, u(x,t), t- x u(x,t)).
\]

The tangent plane to any point in $S=\text{Gr}(u)$ is generated by the vectors
\begin{align*}
\Phi_x^u&=(1, u_x,-u-x u_x)= X+ u_x Y-2 u T,\\
\Phi_t^u&=(0,u_t,1-x u_t)= u_t Y+T
\end{align*}
and the characteristic direction is given by $Z= {\tilde{Z} }/{|\tilde{Z}|}$ where 
\begin{equation}
\label{eq:Z}
\tilde{Z}= X+ ( u_x+ 2 u u_t )Y.
\end{equation}
A unit normal to $S$ is given by $N= \tilde{N} / |\tilde{N}|$ where 
\[
\tilde{N}=  \Phi_x^u \times \Phi_t^u= (u_x + 2 u u_t) X-Y+u_t T
\]
and $\text{Jac}(\Phi^u)=|\Phi_x^u \times \Phi_t^u|=|\tilde{N}|$. 
Therefore the horizontal projection of the unit normal to $S$ is given by  $N_h=\tilde{N}_h / |\tilde{N}|$, where
$
\tilde{N}_h=(u_x + 2 u u_t) X-Y.
$
Observe that $J(Z)=-\nu_h$.

We also assume that $S=\text{Gr}(u)$ is an $\mathbb{H}$-regular surface, meaning that $\tilde{N}_h$ and  $\tilde{Z}$ in \eqref{eq:Z} and   are continuous. 
 Hence also  $(u_x+2u u_t)$ is continuous.
\begin{remark}
\label{rk:horcur}
Let $\ga(s)=(x,t)(s)$ be a $C^1$ curve in $D$ then 
\[
\Gamma(s)=(x,u(x,t), t-x u(x,t))(s) \subset \text{Gr}(u)
\]
is also $C^1$ and 
\[
\Gamma'(s)=x' X +(x'u_x+t'u_t)Y+(t'-2ux')T.
\]
In particular horizontal curves in $\text{Gr}(u)$  satisfy the ordinary differential equation
\begin{equation}
\label{eq:horode}
t'=2 u(x,t) x'.
\end{equation}
\end{remark}

\medskip

From \eqref{eq:AKlipschitz}, the sub-Finsler $K$-area for a Euclidean Lipschitz surface $S$ is
\[
A_K(S)= \int_{S} \| N_h \|_{K,*} dS,
\]
where $\| N_h \|_{K,*}= \escpr{N_h, \pi(N_h)}$ with $\pi=(\pi_1,\pi_2)=\pi_K$ and $dS$ is the Riemannian area measure. Therefore when we consider the intrinsic graph $S= \text{Gr}(u)$ we obtain 
\begin{align*}
A(\text{Gr}(u))&= \int_D \escpr{\tilde{N}_h, \pi(\tilde{N}_h)} \ dx dt\\
&= \int_D  (u_x + 2 u u_t)   \pi_1(u_x + 2 u u_t, -1) - \pi_2(u_x + 2 u u_t, -1)  \, dx dt.
\end{align*}
Observe that the $K$-perimeter of a set was defined in terms of the \emph{outer} unit normal. Hence we are assuming that $S$ is the boundary of the \emph{epigraph} of $u$.

Given $v \in C_0^{\infty}(D)$, a straightforward computation shows that 
\begin{equation}
\label{eq:fv0}
\dfrac{d}{ds}\Big|_{s=0} A(\text{Gr}(u+sv))= \int_D (v_x+ 2v u_t+ 2 u v_t) M dx dt,
\end{equation}
where 
\begin{equation}
\label{eq:M}
M=F(u_x+2uu_t),
\end{equation}
 and $F$ is the function
\begin{equation}
\label{def:F}
F(x)= \pi_1(x,-1) + x \dfrac{\ptl \pi_1}{\partial x} (x,-1)- \dfrac{\partial \pi_2}{\partial x} (x,-1).
\end{equation}
Since $(u_x+2u u_t)$ is continuous  and $\pi$ is at least $C^{1}$ the function $M$ is continuous.

\section{Characteristic curves are $C^2$}
\label{sc:main}
Here we prove our main result, that characteristic curves in an intrinsic Euclidean Lipschitz $\hh$-regular surface with continuous prescribed $K$-mean curvature are of class $C^2$. The reader is referred to Theorem 4.1 in \cite{MR3412382} for a proof of the the sub-Riemannian case. The proof of Theorem~\ref{th:main} depends on Lemmas~\ref{lemmaFK} and \ref{lm:MC1}.

\begin{theorem}
\label{th:main}
Let $K$ be a $C_+^2$ convex set in $\rr^2$ with $0\in\intt(K)$ and $\norm{\cdot}_K$ the associated left-invariant norm in $\hh^1$. Let $\Omega \subset \mathbb{H}^1$ be an open set and $E \subset \Omega$ a set of prescribed $K$- mean curvature $f \in C^0(\Omega)$ with an Euclidean Lipschitz and  $\mathbb{H}$-regular boundary $S$. Then the characteristic curves of $S\cap\Om$ are of class $C^2$.
\end{theorem}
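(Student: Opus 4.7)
The plan is to work locally near a non-singular point, reduce to the setting of intrinsic graphs on a vertical plane, derive a distributional PDE satisfied by the quantity $M$ of \eqref{eq:M}, and bootstrap using the invertibility of $F$ from Lemma~\ref{lemmaFK}. Fix $p\in(S\cap\Om)\smallsetminus S_0$. After a Euclidean rotation about the vertical axis, which is an isometry of $g$, we may assume $N_h(p)$ is a positive multiple of $Y$, so that locally $S$ coincides with the intrinsic graph $\text{Gr}(u)$ of a Lipschitz function $u:D\to\rr$ on a domain $D\subset\{y=0\}$, as in \S\ref{sc:intgraph}. By $\mathbb{H}$-regularity of $S$ the function $\omega:=u_x+2uu_t$ is continuous on $D$, and hence so is $M=F(\omega)$.

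Next I would translate the prescribed mean curvature condition \eqref{eq:prescribedfunctional} into a weak equation on $D$. Testing against normal variations $u+sv$ with $v\in C_0^\infty(D)$ and using \eqref{eq:fv0} plus a direct computation of the first variation of the volume term through the parameterization $\Phi^u$, one obtains
\[
\int_D (v_x+2uv_t+2u_tv)\,M\,dx\,dt=\int_D \tilde f\,v\,dx\,dt
\]
for all such $v$, where $\tilde f$ is a continuous function built from $f$ and the Jacobian of $\Phi^u$. A formal integration by parts (using $\mathrm{div}(\ptl_x+2u\,\ptl_t)=2u_t$) rewrites this as the distributional identity $\tilde Z_0 M=-\tilde f$ on $D$, where $\tilde Z_0=\ptl_x+2u\,\ptl_t$ is the push-forward of the characteristic vector $Z$ of \eqref{eq:Z} to the parameter space.

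The central analytic step is to make the identity $\tilde Z_0 M=-\tilde f$ rigorous along characteristic curves by straightening the foliation. Following \cite{MR3474402,MR3412382} and using the biLipschitz change of coordinates of \cite{MR3984100}, I would pass to coordinates $(x,\tau)$ in which the integral curves of $\tilde Z_0$ become horizontal lines $\tau=\text{const}$; the weak equation then reads $\ptl_x\widehat M=-\widehat{\tilde f}$ with $\widehat M$ and $\widehat{\tilde f}$ continuous. A Fubini-type argument yields that $\widehat M(\cdot,\tau)$ is absolutely continuous for every $\tau$, with continuous derivative $-\widehat{\tilde f}(\cdot,\tau)$, and continuity of this partial derivative in both variables then implies that $M$ is of class $C^1$ along each characteristic curve, with derivative $-\tilde f$.

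Finally, since $K$ is $C_+^2$, Lemma~\ref{lemmaFK} will provide that $F$ is a $C^1$ diffeomorphism onto its image, so $\omega=F^{-1}(M)$ is $C^1$ along every characteristic curve. A characteristic curve in $D$ may be parameterized with $x'(s)=1$ and $t'(s)=2u(x(s),t(s))$ by \eqref{eq:horode}, whence $t''(s)=2\omega(\gamma(s))$ is continuous and $C^1$ along $\gamma$, and $(u\circ\gamma)'(s)=u_x+2uu_t=\omega(\gamma(s))$ is likewise $C^1$. Inserting these into $\Gamma(s)=(x(s),(u\circ\gamma)(s),t(s)-x(s)(u\circ\gamma)(s))$ from Remark~\ref{rk:horcur} then yields $\Gamma\in C^2$. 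I expect the main obstacle, absent from the sub-Riemannian work \cite{MR3412382,MR3474402}, to be precisely this inversion step: recovering $C^1$ regularity of $\omega$ from that of $M=F(\omega)$ requires the strictly positive geodesic curvature of $\ptl K$ (encoded in Lemma~\ref{lemmaFK}), and the first variation itself is more delicate because $\norm{\cdot}_{K,*}$ is neither smooth nor symmetric, forcing one to handle $\pi_K$ carefully throughout.
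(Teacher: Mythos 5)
Your outline follows the paper's own proof essentially step for step: local reduction to an intrinsic Lipschitz graph via the Implicit Function Theorem for $\hh$-regular surfaces, the weak first-variation identity \eqref{eq:fv}, straightening of the characteristic foliation by the biLipschitz map of \cite{MR3984100} to obtain a one-dimensional ODE for $M$ along characteristics (the content of Lemma~\ref{lm:MC1}), inversion of $F$ via Lemma~\ref{lemmaFK}, and the final bootstrap to $C^2$ for $\Gamma$. The identification of the inversion of $F$ as the genuinely sub-Finsler ingredient, requiring $\kappa_{\ptl K}>0$, is exactly the paper's point, and your concluding computation ($t''=2\omega$, $(u\circ\ga)'=\omega$) matches the argument that $\tilde Z$ is $C^1$ along $\Gamma$.

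The one place where you assert what actually has to be proved is the claim that, after the change of coordinates $G(\xi,\eps)=(\xi,t_\eps(\xi))$, ``the weak equation then reads $\ptl_x\widehat M=-\widehat{\tilde f}$.'' The map $G$ is only biLipschitz, so its Jacobian $\ptl t_\eps/\ptl\eps$ exists merely a.e.\ in $\eps$, and the transformed identity is \eqref{eq:fv1}, which still carries both the weight $\ptl t_\eps/\ptl\eps$ and the zeroth-order term $2\tilde v\,\tilde u_t\tilde M$; a distributional vector field does not simply push forward under a Lipschitz homeomorphism. Eliminating both terms is the technical heart of Lemma~\ref{lm:MC1}: one substitutes the test function $\tilde v h/(t_{\eps+h}-t_\eps)$ (admissible because, by Remark~4.3 of \cite{MR3412382}, \eqref{eq:fv} extends to continuous $v$ with continuous $v_x+2uv_t$) and lets $h\to 0$, after which the incremental quotient of $u$ cancels the $\tilde u_t$ term and the Jacobian factor cancels identically, yielding \eqref{eq:tildefvf}. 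Your subsequent ``Fubini-type argument'' also only gives the clean one-dimensional equation for a.e.\ $\eps$; to obtain it on the particular characteristic through $p$ (i.e.\ at $\eps=0$) the paper uses the continuity of $\tilde M$ and $\tilde f$ together with test functions $\eta_\rho(\eps)\psi(\xi)$ concentrating at $\eps=0$, before invoking Lemma~\ref{lm:intbypart}. You gesture at the right continuity input, but these two limiting arguments are the substance of the proof and should be carried out, not assumed.
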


\begin{proof}
By the Implicit Function Theorem for $\hh$-regular surfaces,  see Theorem~6.5 in \cite{MR1871966}, given a point $p \in S$, after a rotation about the vertical axis, there exists an open neighborhood $B \subset \mathbb{H}^1 $ of  $p$ such that $B \cap S$ is the intrinsic graph $\text{Gr}(u)$ of a function $u: D \to \rr$, where $D$ is a domain in the vertical plane $y=0$, and $B\cap E$ is the epigraph of $u$. The function $u$ is Euclidean Lipschitz by our assumption. .Since $\text{Gr}(u)$ has prescribed continuous mean curvature $f$, from equation \eqref{eq:fv0} we get
\begin{equation}
\label{eq:fv}
 \int_D (v_x+ 2v u_t+ 2 u v_t) M + f v  \, dx dt=0,
\end{equation}
for each $v \in C_0^{\infty}(D)$. The function $M$ is defined in \eqref{eq:M}. By Remark~4.3 in \cite{MR3412382} implies that \eqref{eq:fv} holds for each $v \in C_0^{0}(D)$ for which $v_x + 2uv_t$ exists and is continuous.

Let $\Gamma(s)$ be a characteristic horizontal curve passing through $p$ whose velocity is the vector field $\tilde{Z}$ defined in \eqref{eq:Z}, that only depends  on $u_x+ 2 u u_t$. Since $S$ is $\mathbb{H}$-regular the function $u_x+ 2 u u_t$ is continuous and $\Gamma(s)$ is of class $C^1$. Let us consider the function $F$ defined in \eqref{def:F}
and define
\[
g(s)= (u_x+2u u_t)_{\Ga(s)}.
\]
Hence $F(g(s))=M(s)$. The function $F$ is $C^1$ for any convex set $K$ of class $C^2_+$ and, from Lemma \ref{lemmaFK}, we obtain that $F'(x)>0$ for each $x \in \rr$. Therefore $F^{-1}$ is also $C^1$ and $g(s)=F^{-1}(M(s))$. Thanks to Lemma \ref{lm:MC1} we obtain that $M$ is $C^1$ along~$\Gamma$ and we conclude that also $g$ is $C^1$ along $\Gamma$. So $\tilde{Z}$ is $C^1$ and the curve $\Gamma$ is~$C^2$.
\end{proof}

\begin{lemma}
\label{lemmaFK}
Let $K\subset \rr^2$  be a convex body of class $C_+^2$ such that $0 \in\intt(K)$. Then  the function $F$ defined in \eqref{def:F} is $C^1$ and  $F'(x)>0$ for each $x \in \rr$.
\end{lemma}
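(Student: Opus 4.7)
The plan is to exploit the Gauss-map parametrization of $\partial K$ and then observe that, remarkably, the two derivative terms in the definition of $F$ cancel, leaving the identity $F(x) = \pi_1(x,-1)$. Both the $C^1$ regularity and the positivity of $F'$ follow immediately from this identity.

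First I would parametrize $\partial K$ by $\gamma(\theta) := N_K^{-1}(\cos\theta, \sin\theta)$. Since $K$ is of class $C^2_+$, the outer Gauss map $N_K:\partial K\to\mathbb{S}^1$ is a $C^1$ diffeomorphism, so $\gamma$ is $C^1$. A standard computation for plane convex curves gives
\[
\gamma'(\theta) = \frac{1}{\kappa(\theta)}\bigl(-\sin\theta,\,\cos\theta\bigr),
\]
where $\kappa(\theta)>0$ denotes the Euclidean curvature at $\gamma(\theta)$. By definition of $\pi_K$ we have $\pi_K(x,-1) = \gamma(\theta(x))$, where $\theta(x)$ is characterized by $\cos\theta(x) = x/\sqrt{x^2+1}$ and $\sin\theta(x) = -1/\sqrt{x^2+1}$; differentiating implicitly, $\theta$ is smooth with $\theta'(x) = 1/(x^2+1)$.

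The core of the argument is a short chain-rule calculation. Substituting the formulas above I would obtain
\begin{align*}
x\,\frac{\partial \pi_1}{\partial x}(x,-1) &= -x\,\frac{\sin\theta(x)}{\kappa(\theta(x))}\,\theta'(x) = \frac{x}{\kappa(\theta(x))(x^2+1)^{3/2}},\\
\frac{\partial \pi_2}{\partial x}(x,-1) &= \phantom{-x}\,\frac{\cos\theta(x)}{\kappa(\theta(x))}\,\theta'(x) = \frac{x}{\kappa(\theta(x))(x^2+1)^{3/2}}.
\end{align*}
These coincide, so they cancel in the expression defining $F$ and yield
\[
F(x) = \pi_1(x,-1) = \gamma_1\bigl(\theta(x)\bigr).
\]

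With this identity the statement is essentially free. The composition $\gamma_1\circ\theta$ is $C^1$ since $\gamma_1$ is $C^1$ and $\theta$ is $C^\infty$, so $F\in C^1(\rr)$; moreover
\[
F'(x) = \gamma_1'(\theta(x))\,\theta'(x) = -\frac{\sin\theta(x)}{\kappa(\theta(x))(x^2+1)} = \frac{1}{\kappa(\theta(x))(x^2+1)^{3/2}} > 0,
\]
using $\kappa>0$ throughout. The principal obstacle is somewhat hidden: a naive direct differentiation of $F$ appears to require second derivatives of $\pi_K$, i.e., $\partial K\in C^3$, which exceeds our hypotheses. Spotting the cancellation of the two derivative terms is exactly what makes the argument work at the $C^2_+$ level of regularity; once this is seen the rest is purely mechanical, the only subtlety being to keep the orientation and sign conventions of the Gauss map consistent.
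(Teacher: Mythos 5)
Your proof is correct and follows essentially the same route as the paper: both arguments hinge on the observation that $x\,\partial_x\pi_1(x,-1)$ and $\partial_x\pi_2(x,-1)$ cancel, reducing $F$ to $\pi_1(x,-1)$, which is $C^1$ with positive derivative because $\kappa>0$. The only difference is cosmetic --- you parametrize $\ptl K$ by the angle of the Gauss map, while the paper writes the lower arc of $\ptl K$ as a graph $\phi$ and uses $\varphi=(\phi')^{-1}$, so your $F'(x)=1/\bigl(\kappa(\theta(x))(x^2+1)^{3/2}\bigr)$ is exactly the paper's $1/\phi''(\varphi(x))$.
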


\begin{proof}
Parameterize the lower part of the boundary of the convex body $K$ by a function $\phi$  defined on a closed interval $I \subset \rr$. The function $\phi$ is of class $C^2$ in $\mathring{I}$ and the graph becomes vertical at the endpoints of $I$. As $K$ is of class $C_+^2$ we have $\phi''(x)>0$ for each $x \in \rr$. Take $x\in \rr$, then we have
\[
\pi(x,-1)=N_K^{-1}\bigg(\dfrac{(x,-1)}{\sqrt{1+x^2}}\bigg),
\]
where $N_K$ is the outer unit normal to $\partial K$. Let $\varphi(x) \in \mathring{I} $ be the point where 
\[
(\varphi(x), \phi (\varphi(x)))=\pi(x,-1).
\]
Therefore, if we consider the normal $N_K$ of the previous equality we obtain 
\[
\dfrac{(\phi'(\varphi(x)),-1)}{\sqrt{1+(\phi'(\varphi(x)))^2}}=\dfrac{(x,-1)}{\sqrt{1+x^2}}.
\]
Hence $\phi'(\varphi(x))=x$ and so $\varphi$ is the inverse of $\phi'$, that is invertible since $\phi''(x)>0$ for each $x \in \rr$. Notice that 
\begin{align*}
F(x)&=\pi_1(x,-1) + x \dfrac{\partial \pi_1}{\partial x} (x,-1)- \dfrac{\partial \pi_2}{\partial x} (x,-1)\\
&=\varphi(x)+x\varphi'(x)-\phi'(\varphi(x))\varphi'(x)=\varphi(x),
\end{align*}
since $\phi'(\varphi(x))=x$. Hence we obtain  
\[
F'(x)=\varphi'(x)=\dfrac{1}{\phi''(\varphi(x))} >0
\] 
for each $x \in \rr$.
\end{proof}

\begin{lemma}
\label{lm:MC1}
Let $\Omega \subset \mathbb{H}^1$ be an open set and $E \subset \Omega$ a set of prescribed $K$-mean curvature $f \in C^0(\Omega)$ with Euclidean Lipschitz and  $\mathbb{H}$-regular boundary $S$. Then the function $M$ defined in \eqref{eq:M} is of class $C^1$ along  characteristic curves. Moreover, the differential equation
\[
\frac{d}{ds} M(\ga(s))=f(\ga(s))
\]
is satisfied along any characteristic~curve $\ga$.
\end{lemma}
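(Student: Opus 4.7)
The plan is to derive the identity $\tfrac{d}{ds} M(\ga(s))=f(\ga(s))$ from the weak prescribed $K$-mean curvature equation~\eqref{eq:fv} by straightening the planar projections of the characteristic curves, following the strategy developed in \cite{MR3412382, MR3474402}.

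First, I would rewrite the integrand of~\eqref{eq:fv} as a Euclidean divergence on $D$. A direct computation gives
\[
v_x+2 v u_t+2 u v_t=\ptl_x v+\ptl_t(2 u v)=\divv(v\, W),
\]
where $W=\ptl_x+2u\ptl_t$ is the projection to $D$ of the characteristic field $\tilde Z$ in~\eqref{eq:Z}; by Remark~\ref{rk:horcur}, the integral curves of $W$ are precisely the projections of the horizontal characteristic curves of $\text{Gr}(u)$.

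Second, following~\cite{MR3984100}, I would introduce local characteristic coordinates near an interior point $(x_0,\tau_0)\in D$. Since $u$ is Lipschitz, the Cauchy problem $T_s=2u(x_0+s,T)$, $T(0,\tau)=\tau$ admits a unique continuous solution $T(s,\tau)$ on a small rectangle $R$, and $\Psi(s,\tau)=(x_0+s,T(s,\tau))$ is a biLipschitz homeomorphism of $R$ onto its image whose horizontal slices are the projected characteristics. By Rademacher's theorem and a Gronwall estimate, the partial $T_\tau$ exists a.e.~in $R$, is bounded above and below by positive constants, and agrees with the Jacobian of $\Psi$.

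Third comes the main computation. For $V\in C_c^1(R)$, set $v=V\circ\Psi^{-1}$: this is continuous, compactly supported in $D$, and satisfies $v_x+2u v_t=V_s\circ\Psi^{-1}$, which is continuous; hence $v$ is admissible in~\eqref{eq:fv} by Remark~4.3 of~\cite{MR3412382}. Differentiating $T_s=2u(x_0+s,T)$ with respect to $\tau$ yields the a.e.~identity $T_{s\tau}=2u_t(\Psi)\,T_\tau$ in $R$, so
\[
\big(\divv(v\,W)\big)\circ\Psi\cdot T_\tau=V_s T_\tau+V\cdot 2u_t(\Psi)\,T_\tau=V_s T_\tau+V T_{s\tau}=\ptl_s\!\big(V T_\tau\big).
\]
Changing variables in~\eqref{eq:fv} therefore transforms it into
\[
\int_R\big[\ptl_s\!\big(V T_\tau\big)\,\tilde M+\tilde f\, V T_\tau\big]\,ds\,d\tau=0,
\]
with $\tilde M=M\circ\Psi$ and $\tilde f=f\circ\Psi$. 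Since $V$ ranges over $C_c^1(R)$ and $T_\tau$ is bounded away from $0$, the products $V T_\tau$ form a dense class of test functions, so the above is exactly the distributional identity $\ptl_s\tilde M=\tilde f$ on $R$. Because $\tilde M$ and $\tilde f$ are continuous on $R$, slicing in $\tau$ and applying the one-dimensional fact that a distribution with continuous derivative is $C^1$ promotes this to $\tilde M(s,\tau)=\tilde M(0,\tau)+\int_0^s\tilde f(\sigma,\tau)\,d\sigma$ for every $(s,\tau)\in R$, yielding the claim for the characteristic $\ga(s)=\Psi(s,\tau)$. I expect the principal obstacle to be the limited regularity of $\Psi$—only biLipschitz, with $T_\tau$ defined merely a.e.—which forces the change of variables and the integration-by-parts step to be carried out in the Lipschitz category, as in \cite{MR3984100, MR3474402}, rather than in a smooth setting.
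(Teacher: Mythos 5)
Your overall architecture coincides with the paper's: straighten the projected characteristics by the biLipschitz flow map of $t'=2u$ (the map $G(\xi,\eps)=(\xi,t_\eps(\xi))$ in the paper, your $\Psi$), change variables in the weak equation \eqref{eq:fv} using the Lipschitz change-of-variables formula, reduce to a one-dimensional weak identity along a single characteristic, and conclude via Lemma~\ref{lm:intbypart}. The difference --- and where the genuine gaps lie --- is in how the Jacobian factor $T_\tau=\ptl t_\eps/\ptl\eps$ and the term $2\tilde v\,\tilde u_t$ are eliminated.

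First gap: your central cancellation $V_sT_\tau+V\cdot 2u_t(\Psi)\,T_\tau=V_sT_\tau+VT_{s\tau}=\ptl_s(VT_\tau)$ treats $T_\tau$ as if it had an $s$-derivative satisfying the variational equation $T_{s\tau}=2u_t(\Psi)\,T_\tau$. For merely Lipschitz $u$ this is precisely the point at issue: $T_\tau$ is only an $L^\infty$ function defined a.e.\ by Rademacher, and differentiating the ODE $T_s=2u(x_0+s,T)$ in $\tau$ and then interchanging $\ptl_s$ and $\ptl_\tau$ requires a distributional (or difference-quotient) justification that you do not supply. The paper circumvents this entirely by inserting the test function $\tilde v\,h/(t_{\eps+h}-t_\eps)$ and letting $h\to0$ under the integral: the cancellation then occurs at the level of difference quotients, which are defined everywhere, and dominated convergence does the rest. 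Second gap: even granting your identity, your transformed equation is tested only against functions of the form $VT_\tau$ with $V\in C_c^1(R)$; these carry the $L^\infty$ factor $T_\tau$ and are not smooth, and the assertion that they form ``a dense class of test functions'' yielding the distributional identity $\ptl_s\tilde M=\tilde f$ is not justified --- to reach an arbitrary $\phi\in C_c^\infty(R)$ you would need $V=\phi/T_\tau$, which is not $C^1$, so an approximation argument is unavoidable. The paper's device resolves this too: the inserted factor $h/(t_{\eps+h}-t_\eps)$ converges to $(\ptl t_\eps/\ptl\eps)^{-1}$ and cancels the Jacobian, so the limiting identity \eqref{eq:tildefvf} is tested against test functions that are smooth in $\xi$, after which a Dirac sequence in $\eps$ localizes to the single characteristic. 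You should either import the difference-quotient trick or prove the missing lemmas (commutation $\ptl_sT_\tau=\ptl_\tau T_s$ in the distributional sense, Lipschitz regularity of $T_\tau$ in $s$ for a.e.\ $\tau$, and the density step); as written, the two key steps are formal.
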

\begin{proof}
Let $\Gamma(s)$ be a characteristic curve passing through $p$  in $\text{Gr}(u)$. Let $\ga(s)$ be the projection of $\Gamma(s)$ onto the $xt$-plane, and $(a,b) \in D$ the projection of $p$ to the $xt$-plane. We  parameterize $\ga$ by  $s\to (s,t(s))$. By Remark~\ref{rk:horcur} the curve $s \to (s,t(s))$  satisfies the ordinary differential equation $t'=2u$. For $\eps$ small enough, Picard-Lindel\"of's theorem implies the existence of $r>0$ and  a solution $t_{\eps}: ]a-r,a+r[\to \rr$ of the Cauchy problem 
\begin{equation}
\label{eq:teps}
\begin{cases}
t_{\eps}'(s)=2u(s, t_{\eps}(s)),\\
t_{\eps}(a)=b+\eps.
\end{cases}
\end{equation}
We define $\ga_{\eps}(s)=(s,t_{\eps}(s))$  so that $\ga_0=\ga$. Here we exploit an argument similar to the one developed in \cite{MR3984100}.
By Theorem~2.8 in \cite{MR2961944} we gain that $t_{\eps}$ is Lipschitz with respect to $\eps$ with Lipschitz constant less than  or equal to $e^{Lr}$.  Fix $s \in ]a-r,a+r[$, the inverse of the function $\eps \to t_{\eps}(s)$ is given by $\bar{\chi}_{t}(-s)= \chi_{t}(-s)-b$ where $\chi_t$ is the unique solution of the following Cauchy problem
\begin{equation}
\label{eq:tau}
\begin{cases}
\chi_{t}'(\tau)=2u(\tau, \chi_{t}(\tau))\\
\chi_{t}(a+s)=t.
\end{cases}
\end{equation}
Again by Theorem~2.8 in \cite{MR2961944} we have that $\bar{\chi}_{t}$ is Lipschitz continuous with respect to $t$, thus the function $\eps \to t_{\eps}$ is a locally biLipschitz homeomorphisms.

We  consider the following Lipschitz coordinates 
\begin{equation}
\label{eq:Gcc}
G(\xi,\eps)=(\xi,t_{\eps}(\xi))=(s,t)
\end{equation}
around the characteristic curve passing through  $(a,b)$. Notice that, by the uniqueness result for  \eqref{eq:teps},  $G$ is injective. 
Given $(s,t)$ in the image of $G$ using the inverse function $\bar{\chi}_{t}$ defined in \eqref{eq:tau} we find $\eps$ such that $t_{\eps}(s)=t$, therefore $G$ is surjective. By the Invariance of Domain Theorem \cite{MR1511684},   is a homeomorphism.
The Jacobian of $G$ is defined by  
\begin{equation}
\label{eq:JacG}
\textbf{J}_G= \det \left(\begin{array}{cc} 1 & 0\\ t_{\eps}' & \frac{\partial t_{\eps}}{\partial \eps} \end{array}\right)=\frac{\partial t_{\eps}}{\partial \eps}(s)
\end{equation}
almost everywhere in $\eps$. Any function $\varphi$ defined on $D$ can  be considered as a function of the variables $(\xi,\eps)$ by making $\tilde{\varphi}(\xi,\eps)=\varphi(\xi,t_{\eps}(\xi))$. Since the function $G$ is $C^1$ with respect to $\xi$ we have 
\[
 \frac{\partial \tilde{\varphi}}{\partial \xi}= \varphi_x+  t_{\eps}' \, \varphi_t =\varphi_x+  2 u  \varphi_t.   
 \]
Furthermore, by \cite[Theorem 2 in Section 3.3.3]{Evans2015} or \cite[Theorem 3]{MR1201446}, we may apply the change of variables formula for Lipschitz maps. Assuming that the support of $v$  is contained in a sufficiently small neighborhood of $(a,b)$, we can express the integral \eqref{eq:fv} as
\begin{equation}
\label{eq:fv1}
 \int_I \big(\int_{a-r}^{a+r}( (\frac{\partial \tilde{v}}{\partial \xi}+ 2 \tilde{v} \, \tilde{u}_t) \tilde{M} + \tilde{f} \tilde{v} )\frac{\partial t_{\eps}}{\partial \eps} \,  d\xi \big) d\eps=0,
\end{equation}
where $I$ is a small interval containing $0$. Instead of $\tilde{v}$ in \eqref{eq:fv1} we consider the function $\tilde{v} h/(t_{\eps+h}-t_{\eps})$, where $h$ is a small enough parameter. Then we obtain 
\begin{align*}
\dfrac{\partial }{\partial \xi} \left({  \dfrac{\tilde{v} h }{(t_{\eps+h}-t_{\eps})}} \right)&= \frac{\partial \tilde{v}}{\partial \xi} \dfrac{ h }{(t_{\eps+h}-t_{\eps})}-  \tilde{v} h \dfrac{t_{\eps+h}'-t_{\eps}'} {(t_{\eps+h}-t_{\eps})^2}\\
&=\frac{\partial \tilde{v}}{\partial \xi} \dfrac{ h }{(t_{\eps+h}-t_{\eps})}-  2 \tilde{v} h \dfrac{u(\xi, t_{\eps+h}(\xi))-u(\xi,t_{\eps}(\xi)} {(t_{\eps+h}-t_{\eps})^2},
\end{align*}
that tends to
\[
\left(\frac{\partial t_{\eps}}{\partial \eps}\right)^{-1} \left( \frac{\partial \tilde{v}}{\partial \xi} -2  \tilde{v} \tilde{u}_t \right) \qquad a.e. \ \text{in} \ \eps,
\]
when $h$ goes to $0$. Putting $\tilde{v} h/(t_{\eps+h}-t_{\eps})$ in \eqref{eq:fv1} instead of $\tilde{v}$ we gain
\[
\int_I \left(\int_{a-r}^{a+r}  \dfrac{h \frac{\partial t_{\eps}}{\partial \eps}}{(t_{\eps+h}-t_{\eps})}\left(\frac{\partial \tilde{v}}{\partial \xi}+ 2 \tilde{v} \, (\tilde{u}_t- \dfrac{\tilde{u}(\xi, {\eps+h})-\tilde{u}(\xi,{\eps})} {(t_{\eps+h}-t_{\eps})}) \right) \tilde{M} + \tilde{f} \tilde{v}  \, d\xi \right) d\eps=0.
\]
Using Lebesgue's dominated convergence theorem and letting $h\to 0$ we have
\begin{equation}
\label{eq:tildefvf}
\int_I \left(\int_{a-r}^{a+r}  \frac{\partial \tilde{v}}{\partial \xi} \tilde{M} +\tilde{f} \tilde{v}  \, d\xi \right) d\eps=0.
\end{equation}
Let $\eta: \rr \to \rr$ be a positive function compactly supported in $I$ and for $\rho>0$ we consider the family $\eta_{\rho}(x)=\rho^{-1} \eta(x/\rho)$, that weakly converge to the Dirac delta distribution. Putting  the test functions $\eta_{\rho}(\eps) \psi(\xi)$ in  \eqref{eq:tildefvf} and letting $\rho\to 0$ we get
\begin{equation}
\label{eq:fvacc}
\int_{a-r}^{a+r}  \psi'(\xi) \tilde{M}(\xi,0) +\tilde{f}(\xi,0) \psi(\xi)  \, d\xi=0,
\end{equation}
for each $\psi \in C^{\infty}_0((a-r,a+r))$. Since $u_x+2u u_t $ is continuous,  $M$ in  \eqref{eq:M} is  continuous, thus also $\tilde{M}$. Hence thanks to Lemma \ref{lm:intbypart} we conclude that $M$ is $C^1$ along $\gamma$, thus by Remark \ref{rk:horcur} is also $C^1$ along $\Gamma$.

Since $M$ is $C^1$ along the characteristic curve, we can integrate by parts in equation \eqref{eq:fvacc}  to obtain
\[
\int_{a-r}^{a+r}  \left(-  \tilde{M}'(0,\xi)  + \tilde{f}(0,\xi)  \right) \psi(\xi)  \, d\xi=0,
\]
for each $\psi \in C^{\infty}_0((a-r,a+r))$. That means that $M$ satisfies the equation 
\[
\dfrac{d}{ds} M(\ga(s))=f(\ga(s))
\]
along characteristic curves.
\end{proof}

\begin{lemma}[{\cite[Lemma~4.2]{MR3412382}}]
\label{lm:intbypart}
Let $J \subset \rr$ be an open interval and $g,h \in C^{0}(J)$. Let $H\in C^1(J)$ be a primitive of $h$. Assume that  
\[
\int_J  \psi' g + h \psi =0,
\]
for each $\psi \in C^{\infty}_0(J)$. Then the function $g-H$ is a constant function in $J$. In particular $g \in C^1(J)$.
\end{lemma}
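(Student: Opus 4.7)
The plan is to rewrite the hypothesis so that only the derivative of the test function appears, and then reduce to the classical fact that a continuous function with vanishing distributional derivative is constant. First I would use that $H\in C^1(J)$ is a primitive of $h$ and integrate by parts: since $\psi \in C^\infty_0(J)$ has compact support,
\[
\int_J h\psi = \int_J H'\psi = -\int_J H\psi'.
\]
Substituting into the assumption gives the cleaner identity $\int_J (g-H)\psi' = 0$ for every $\psi \in C^\infty_0(J)$.

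Next, to pass from this orthogonality condition to the statement that $g-H$ is constant I would use the standard trick of testing against primitives of mean-zero bumps. Fix once and for all a function $\eta \in C^\infty_0(J)$ with $\int_J \eta = 1$. For an arbitrary $\phi \in C^\infty_0(J)$, set $c := \int_J \phi$; then $\phi - c\eta$ has integral zero and compact support in $J$, so its antiderivative
\[
\psi(s) = \int_{-\infty}^{s} (\phi - c\eta)(\tau)\,d\tau
\]
is smooth, vanishes outside a compact subset of $J$, and is therefore an admissible test function. Plugging it into the orthogonality identity produces
\[
\int_J (g-H)\phi = c \int_J (g-H)\eta = \Bigl(\int_J \phi\Bigr) \cdot C,
\]
where $C := \int_J (g-H)\eta$ depends only on the fixed bump $\eta$.

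Finally, rewriting the previous identity as $\int_J \bigl((g-H) - C\bigr)\phi = 0$ for every $\phi \in C^\infty_0(J)$ and invoking the fundamental lemma of the calculus of variations (applicable because $g-H-C$ is continuous on $J$) forces $g - H \equiv C$ pointwise. Hence $g = H + C$ is of class $C^1$ on $J$ with derivative $h$, as claimed.

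I do not anticipate a serious obstacle, since the argument is classical; the only delicate step is verifying that the primitive $\psi$ constructed above is \emph{compactly supported} inside $J$, which is precisely why we subtract the correction $c\eta$ to kill the total integral of $\phi$. The continuity hypotheses on $g$ and $h$ enter only at the very end, in order to upgrade the distributional identity $g - H - C = 0$ to a pointwise one.
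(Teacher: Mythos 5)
Your proof is correct: the reduction to $\int_J (g-H)\psi'=0$ followed by the du Bois--Reymond trick of testing against primitives of mean-zero bumps is exactly the standard argument, and it is essentially the one given for the cited Lemma~4.2 of \cite{MR3412382} (the present paper itself supplies no proof, only the citation). The one delicate point you flag --- that subtracting $c\eta$ is what makes the primitive $\psi$ compactly supported in $J$ --- is handled correctly.
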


\begin{remark}
\label{rk:PW}
Let $K$ be  a convex body of class $C_+^2$ such that $0\in K$. Following \cite{2020arXiv200704683P} we consider a clockwise-oriented $P$-periodic parameterization $\ga:\rr \to \rr^2$ of $\partial K$.
 For a fixed $v \in \rr$ we take the translated curve $s \to \ga(s+v)-\ga(v)=(x(s),y(s))$  and we consider its horizontal lifting $\Ga_v(s)$ to $\hh^1$ starting at $(0,0,0) \in \hh^1$ for $s=0$, given by
\[
\Ga_v(s)=\left((x(s),y(s), \int_0^s y(\tau) x'(\tau)-x(\tau) y'(\tau) d\tau \right).
\]
The Pansu-Wulff shape associated to $K$ is defined by 
\[
\mathbb{S}_K= \bigcup_{v \in [0,P)} \Ga_v([0,P]).
\]
In \cite[Theorem 3.14]{2020arXiv200704683P} it is shown that the horizontal liftings $\Ga_v$, for each $v \in [0,P)$, are solutions for $H_K= 1$, therefore $\mathbb{S}_K$ has constant prescribed $K$-mean curvature equal to $1$.  Since the curves $\Ga_v$ have the same regularity as $\ptl K$, the $C^2$ regularity result for horizontal curves obtained in Theorem \ref{th:main} is optimal.
\end{remark}

\begin{corollary}
Let $K$ be a $C_+^2$ convex set in $\rr^2$ with $0\in\intt(K)$ and $\norm{\cdot}_K$ the associated left-invariant norm in $\hh^1$.
Let $\Omega \subset \mathbb{H}^1$ be an open set and $E \subset \Omega$ a set of prescribed $K$-mean curvature $f \in C^0(\Omega)$ with $C^1$ boundary $S$. Then the characteristic curves in $S\smallsetminus S_0$ are of class $C^2$.
\end{corollary}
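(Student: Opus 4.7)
The plan is to reduce the statement to Theorem \ref{th:main} by a purely local argument. Given a point $p\in S\smallsetminus S_0$, the goal is to exhibit an open neighborhood $U$ of $p$ in $\hh^1$ on which the hypotheses of Theorem \ref{th:main} are satisfied, apply that theorem, and conclude that the characteristic curve through $p$ is of class $C^2$. Since $C^2$ regularity is local and $p$ is arbitrary in $S\smallsetminus S_0$, this suffices.

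First I would observe that $S\smallsetminus S_0$ is an open subset of $S$. Indeed, because $S$ is $C^1$ the outer unit normal $N$ is continuous on $S$, and a point of $S$ belongs to $S_0$ precisely when $N=\pm T$ there, which is a closed condition. Hence for every $p\in S\smallsetminus S_0$ there exists an open $U\subset\hh^1$ with $p\in U$ such that $S\cap U\subset S\smallsetminus S_0$; on this $U$ the horizontal projection $N_h$ of the unit normal is continuous and nowhere vanishing.

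Next I would verify the two regularity hypotheses of Theorem \ref{th:main} on $\Om\cap U$. The surface $S\cap U$ is Euclidean Lipschitz because it is $C^1$. For the $\hh$-regularity, after shrinking $U$ if necessary I may describe $S\cap U$ as the zero set of a $C^1$ defining function $\var$ with $\nabla\var\neq 0$; since the horizontal vector fields $X$ and $Y$ have smooth coefficients, the horizontal gradient $\nabla_\hh\var=(X\var,Y\var)$ is continuous, so $\var\in C^1_\hh(U)$, and the identification of $\nabla_\hh\var$ with $N_h$ up to a nonzero continuous scalar shows $\nabla_\hh\var\neq 0$ on $U$. Hence $S\cap U$ is $\hh$-regular in the sense of \cite{MR1871966}. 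Moreover, $E$ retains the property of having prescribed $K$-mean curvature $f\in C^0(\Om\cap U)$ on the smaller open set $\Om\cap U$, since the critical-point condition is tested against vector fields with compact support in bounded open subsets of $\Om\cap U$.

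Finally, applying Theorem \ref{th:main} to $E$ on the open set $\Om\cap U$ yields that the characteristic curves of $S\cap(\Om\cap U)$ are of class $C^2$, and hence so is the characteristic curve through $p$. I do not foresee a serious obstacle; the only point requiring care is the compatibility between the Riemannian outer horizontal normal $N_h$ used to define $S_0$ and the horizontal gradient $\nabla_\hh\var$ of a local $C^1$ defining function, but these agree up to a continuous nonvanishing scalar and therefore encode the same nondegeneracy condition at $p$.
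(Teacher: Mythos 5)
Your proposal is correct and follows essentially the same route as the paper: the paper's proof likewise observes that a $C^1$ surface is Euclidean Lipschitz and that on $S\smallsetminus S_0$ the horizontal normal is continuous and nowhere vanishing, so $S\smallsetminus S_0$ is $\hh$-regular and Theorem \ref{th:main} applies. Your version merely makes the localization around a point $p$ and the identification of $N_h$ with the horizontal gradient of a defining function more explicit.
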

\begin{proof}
Since $S$ is of class $C^1$, in the regular part $S \smallsetminus S_0$ the horizontal normal $\nu_h$  is a nowhere-vanishing continuous vector fields, thus $S \smallsetminus S_0$ is an $\hh$-regular surface. In particular a $C^1$ surface is Lipschitz, thus $S\smallsetminus S_0$ verifies the hypotheses  of Theorem \ref{th:main} and the characteristic curves in $S\smallsetminus S_0$ are of class $C^2$.
\end{proof}
\begin{remark}
When $S$ is of class $C^1$ the proof of Lemma \ref{lm:MC1} is is much easier. Indeed the solution $t_{\eps}$ of the Cauchy Problem \eqref{eq:teps}  is differentiable in $\eps$, thus  the function $\partial t_{\eps}/ \partial \eps$ satisfies the following ODE 
 $$\left(\frac{\partial t_{\eps}}{\partial \eps}\right)'(s)=2 u_t(s, t_{\eps}(s)) \frac{\partial t_{\eps}}{\partial \eps}, \qquad \frac{\partial t_{\eps}}{\partial \eps}(a)=1.$$
 That implies that 
 \[
 \frac{\partial t_{\eps}}{\partial \eps}(s)=e^{\int_a^s  2 u_t(\tau, t_{\eps}(\tau))) d\tau} >0.
 \]
 Since the Jacobian $\mathbf{J}_G$ defined in \eqref{eq:JacG} is equal to $\partial t_{\eps}/ \partial \eps>0$  the change of variables $G(\xi,\eps)$ is invertible. Hence  the rest of  the proof of Lemma \ref{lm:MC1}  goes  in the same way as before. 
\end{remark}

\section{The sub-Finsler mean curvature equation}
\label{sc:KMCE}
Given an Euclidean Lipschitz boundary $S$ whose characteristic curves in $S\smallsetminus S_0$ are of class $C^2$, for each point $p \in S\smallsetminus S_0$ we can define the $K$-mean curvature $H_K$ of $S$ by 
\begin{equation}
\label{eq:meanK}
H_K=\escpr{D_Z \pi_K(\nu_h),Z}=\escpr{\nabla_Z \pi_K(\nu_h),Z},
\end{equation}
where $\nu_h$ is the outer horizontal unit normal to $S$. This definition was given in \cite{2020arXiv200704683P} for surfaces of class $C^2$.

\begin{proposition}
\label{prop:KMCE}
Let $\Omega \subset \mathbb{H}^1$ be an open set and $E \subset \Omega$ a set of prescribed $K$- mean curvature $f \in C^0(\Omega)$ Euclidean Lipschitz and  $\mathbb{H}$-regular boundary $S$. Then $H_K(p)=f(p)$ for each $p\in S\smallsetminus S_0 $.
\end{proposition}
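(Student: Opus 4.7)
The plan is to work locally in intrinsic graph coordinates and compute $H_K$ along a characteristic curve in closed form in terms of the slope $\psi := u_x + 2 u u_t$, then match the result against the ODE $dM/ds = f$ provided by Lemma~\ref{lm:MC1}. Fix $p \in S \smallsetminus S_0$. By the Implicit Function Theorem for $\hh$-regular surfaces (applied as in the proof of Theorem~\ref{th:main}), after a rotation about the $t$-axis we may write $S \cap B = \mathrm{Gr}(u)$ for a Euclidean Lipschitz function $u\colon D \to \rr$, with $\psi$ continuous on $D$. In these coordinates,
\[
\nu_h = \frac{\psi X - Y}{\sqrt{1+\psi^{2}}}, \qquad Z = \frac{X+\psi Y}{\sqrt{1+\psi^{2}}}, \qquad \pi_K(\nu_h) = \pi_1(\psi,-1)\, X + \pi_2(\psi,-1)\, Y,
\]
the last identity using that $\pi_K$ is homogeneous of degree $0$. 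Parametrize the characteristic curve $\Gamma$ through $p$ by $s \mapsto (s, u(s,t(s)), t(s)-s\,u(s,t(s)))$ with $t'(s) = 2u$, so that $\Gamma'(s) = \tilde Z_{\Gamma(s)} = \sqrt{1+\psi^{2}}\, Z_{\Gamma(s)}$. By Theorem~\ref{th:main} the curve $\Gamma$ is $C^{2}$, so $\psi$ is $C^{1}$ along $\Gamma$ and the expression $H_K = \escpr{D_Z \pi_K(\nu_h), Z}$ is well defined at $p$.

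Next I will compute $D_Z \pi_K(\nu_h)$ using the Levi-Civita relations. Since $D_X X = D_Y Y = 0$, $D_X Y = -T$ and $D_Y X = T$, a direct expansion gives
\[
D_Z X = \frac{\psi}{\sqrt{1+\psi^{2}}}\, T, \qquad D_Z Y = -\frac{1}{\sqrt{1+\psi^{2}}}\, T,
\]
hence the $T$-component of $D_Z \pi_K(\nu_h)$ is orthogonal to $Z$ and
\[
H_K = \frac{Z(\pi_1(\psi,-1)) + \psi\, Z(\pi_2(\psi,-1))}{\sqrt{1+\psi^{2}}}.
\]
Writing $\pi_j^{(1)}(x) := (\partial_x \pi_j)(x,-1)$, the chain rule applied along $\Gamma$ (where $\psi$ is $C^{1}$) yields $Z(\pi_j(\psi,-1)) = (\pi_j^{(1)}(\psi)/\sqrt{1+\psi^{2}})\,d\psi/ds$, where $s$ is the parameter above (not arc length). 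The computation in Lemma~\ref{lemmaFK} identifies $\pi_1(x,-1) = \varphi(x)$ and $\pi_2(x,-1) = \phi(\varphi(x))$, whence $\pi_1^{(1)}(x) = \varphi'(x)$ and $\pi_2^{(1)}(x) = \phi'(\varphi(x))\,\varphi'(x) = x\, \varphi'(x)$. Substituting,
\[
H_K = \frac{\varphi'(\psi) + \psi^{2}\varphi'(\psi)}{1+\psi^{2}}\,\frac{d\psi}{ds} = \varphi'(\psi)\,\frac{d\psi}{ds} = F'(\psi)\,\frac{d\psi}{ds} = \frac{d}{ds}\, M(\ga(s)),
\]
using once more the identity $F = \varphi$ from Lemma~\ref{lemmaFK} and $M = F(\psi)$ from~\eqref{eq:M}.

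Finally, Lemma~\ref{lm:MC1} states that $\tfrac{d}{ds}\,M(\gamma(s)) = f(\gamma(s))$ along every characteristic curve, so $H_K(p) = f(p)$. Since $p \in S\smallsetminus S_0$ was arbitrary, this proves the claim. The only genuinely delicate point is the legitimacy of the chain-rule manipulation of $\pi_j(\psi,-1)$ along $\Gamma$, which requires that $\psi$ be differentiable along characteristic curves; this is supplied precisely by the $C^{2}$ regularity of $\Gamma$ (equivalently, the $C^{1}$ regularity of $M$ and invertibility of $F$) established in Theorem~\ref{th:main} and Lemma~\ref{lm:MC1}. Everything else is an explicit identity forced by the structure of $\pi_K$ on the downward vertical fiber $\{(x,-1) : x \in \rr\}$ and the cancellation $\pi_1^{(1)}(x) + x\,\pi_2^{(1)}(x)/x \cdot \psi \cdots$ that collapses the factor $1+\psi^{2}$.
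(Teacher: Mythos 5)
Your proof is correct and follows essentially the same route as the paper's: both work in intrinsic graph coordinates, use Lemma~\ref{lemmaFK} to identify $\pi(x,-1)=(\varphi(x),\phi(\varphi(x)))$ with $\varphi=(\phi')^{-1}$, compute $H_K=\varphi'(\psi)\,d\psi/ds=\tfrac{d}{ds}M(\gamma(s))$ along a characteristic curve, and conclude via the ODE of Lemma~\ref{lm:MC1}. The only (harmless) difference is bookkeeping --- you expand $D_ZX$, $D_ZY$ and discard the $T$-components explicitly, while the paper differentiates $\pi_K(\nu_h)$ covariantly along $\Gamma$ directly; your garbled closing remark about the ``cancellation'' should simply be deleted, since the correct collapse of the factor $1+\psi^2$ is already displayed in your computation.
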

\begin{proof}
By the Implicit Function Theorem for $\hh$-regular surfaces, Theorem~6.5 in \cite{MR1871966}, given a point $p \in S$, after a rotation about the $t$-axis, there exists an open neighborhood $B \subset \mathbb{H}^1 $ of  $p$ such that $B \cap S$ is the intrinsic graph of a function $u: D \to \rr$ where $D$ is a domain in the vertical plane $y=0$. The function $u$ is Euclidean Lipschitz by our assumption. We set $B \cap S= \text{Gr}(u)$. We assume that $E$ is locally the epigraph of $u$.
  
Let $\Gamma(s)$ be a characteristic curve passing through $p$  in $\text{Gr}(u)$ and  $\ga(s)$ its projection on the $xt$-plane. 
The characteristic vector $Z$ defined in \eqref{eq:Z} is given by 
\[
Z=\dfrac{X+(u_x+2uu_t)Y }{(1+(u_x+2uu_t)^2)^{\frac{1}{2}}}.
\]
Since $S$ is $\hh$-regular,  $Z$ and the horizontal unit normal
\[
\nu_h=\dfrac{(u_x+2uu_t)X-Y }{(1+(u_x+2uu_t)^2)^{\frac{1}{2}}}
\] 
are continuous vector fields.
By Lemma \ref{lm:MC1} we have that $M=F(u_x+2u u_t)$ defined in \eqref{eq:M} satisfies the differential equation 
\[
\dfrac{d}{ds} M(\ga(s))=f(\ga(s))
\]
along the characteristic curves.
Therefore  we obtain
\begin{align*}
\dfrac{d}{ds} M(\ga(s))&= F'(u_x+2u u_t) \dfrac{d}{ds} \big[(u_x+2uu_y)(\ga(s))\big] 
\\
&=\dfrac{1}{\phi''(u_x+2u u_t)} \dfrac{d}{ds} \big[(u_x+2uu_y)(\ga(s))\big],
\end{align*}

As in proof of Lemma~\ref{lemmaFK}, we parametrize the lower part of the boundary of the convex body $K$ by a function $\phi$  defined on a closed interval $I \subset \rr$. Again by Lemma~\ref{lemmaFK}  we have
\[
\pi_K(x,-1)=(\varphi(x),\phi(\varphi(x)),
\]
where $\varphi$ is the inverse function of $\phi'$.
Furthermore the $K$-mean curvature defined \eqref{eq:meanK} is equivalent to 
\begin{align*}
H_K&=\escpr{D_Z \pi_K(u_x+2uu_t,-1) , Z}\\
&=\dfrac{\escpr{\dfrac{D}{ds} \big[\varphi(u_x+2uu_t) X_{\ga}+\phi(\varphi(u_x+2uu_t))Y_{\ga}\big], Z}}
{1+(u_x+2uu_t)^2}  \\
&=\dfrac{\varphi'(u_x+2u u_t) \dfrac{d}{ds}(u_x+2uu_t)\big(1+ \phi'(\varphi(u_x+2uu_t)) (u_x+2uu_t)\big)}{1+(u_x+2uu_t)^2}\\
&=\dfrac{1}{\phi''(u_x+2u u_t)} \dfrac{d}{ds} \big[(u_x+2u u_t)(\ga(s))\big].
\end{align*}
Hence we obtain $H_K=\tfrac{d}{ds}M(\ga(s))$ and so $H_K(p)=f(p)$ for each $p\in S\smallsetminus S_0 $.
\end{proof}

The following result allows us to express  the $K$-mean curvature $H_K$ in terms of the sub-Riemannian mean curvature $H_D$.

\begin{proposition}
\label{prop:subKMCE} 
Let $K \subset \rr^2$ be a convex body of class $C_+^2$ such that $0 \in \intt(K)$ and $\pi_K=N_K^{-1}$. Let $\kappa$ be the strictly positive curvature of the boundary $\partial K$. Let $\Omega \subset \mathbb{H}^1$ be an open set and $E \subset \Omega$ a set of prescribed $K$-mean curvature $f \in C^0(\Omega)$ with Euclidean Lipschitz and  $\mathbb{H}$-regular boundary $S$. Then, we have 
\[
H_D(p)=\kappa( \pi_K(\nu_h)) f (p) \qquad \text{for each} \quad p \in S \smallsetminus S_0 ,
\]
where $H_D(p)=\escpr{D_Z \nu_h, Z}$ is the sub-Riemannian mean curvature, $\nu_h$ be the horizontal unit normal at $p$ to $S \smallsetminus S_0 $ and $Z= J(\nu_h)$ be the characteristic vector field.
\end{proposition}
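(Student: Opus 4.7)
The plan is to reduce the claim to the identity $H_K = H_D/\kappa(\pi_K(\nu_h))$, since by Proposition~\ref{prop:KMCE} we already have $H_K(p)=f(p)$ at every $p\in S\smallsetminus S_0$. I work in the horizontal orthonormal frame $\{X,Y\}$: write $\nu_h = aX+bY$ with $a^2+b^2=1$, so that $Z=J(\nu_h) = -bX+aY$, and write $\pi_K(\nu_h) = p_1 X + p_2 Y$, where $(p_1,p_2) = N_K^{-1}(a,b)$. Since $Z$ is horizontal, the Levi-Civita connection $D$ agrees with the pseudo-hermitian connection $\nabla$ when paired against horizontal vectors, and $\nabla X = \nabla Y = 0$; this yields the concise expressions
\begin{equation*}
H_D = -bZ(a) + aZ(b),\qquad H_K = -bZ(p_1) + aZ(p_2).
\end{equation*}

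Next I exploit the unit-norm constraint $a^2+b^2=1$: differentiating along $Z$ gives $aZ(a)+bZ(b)=0$, hence $(Z(a),Z(b)) = \lambda(-b,a)$ for some scalar $\lambda$, and pairing with $(-b,a)$ recovers $\lambda = H_D$. By the chain rule, $Z(p_i) = H_D\bigl(-b\,\partial_a p_i + a\,\partial_b p_i\bigr)$, so everything reduces to the directional derivative of $\pi_K$ in the unit tangent direction $(-b,a)$ of $\sph^1$ at $(a,b)$. The key geometric input, which is Lemma~\ref{lm:diff} (the intrinsic counterpart of Lemma~\ref{lemmaFK}), is that $\pi_K = N_K^{-1}$ together with $dN_K/ds = \kappa\,\tau$ along $\partial K$ imply that this differential equals $(1/\kappa(\pi_K(\nu_h)))(-b,a)$; in coordinates,
\begin{equation*}
\bigl(-b\,\partial_a p_1 + a\,\partial_b p_1,\; -b\,\partial_a p_2 + a\,\partial_b p_2\bigr) = \frac{1}{\kappa(\pi_K(\nu_h))}(-b,a).
\end{equation*}

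Putting the pieces together,
\begin{equation*}
H_K = -b Z(p_1) + a Z(p_2) = \frac{H_D}{\kappa(\pi_K(\nu_h))}(a^2+b^2) = \frac{H_D}{\kappa(\pi_K(\nu_h))},
\end{equation*}
and invoking Proposition~\ref{prop:KMCE} yields $H_D(p) = \kappa(\pi_K(\nu_h))\,f(p)$ for $p \in S\smallsetminus S_0$. The only non-routine step is the geometric computation of $d\pi_K$ on the tangent direction of $\sph^1$, but since $K\in C^2_+$ the outer Gauss map $N_K$ is a $C^1$ diffeomorphism with $\kappa>0$, so this reduces to the classical identity for the inverse Gauss map of a strictly convex planar curve, already used in Lemma~\ref{lemmaFK} in the form $\varphi' = 1/\phi''$. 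A minor preliminary is to justify that $\nabla_Z\pi_K(\nu_h)$ is well-defined along the characteristic through $p$; this is precisely guaranteed by Theorem~\ref{th:main} under the Euclidean Lipschitz and $\hh$-regular hypotheses on $S$.
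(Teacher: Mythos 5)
Your argument is correct and follows the same overall skeleton as the paper's proof: both reduce the statement to the identity $H_K=H_D/\kappa(\pi_K(\nu_h))$, both feed in $H_K=f$ from Proposition~\ref{prop:KMCE}, and both rely on Lemma~\ref{lm:diff} for the key fact $(d\pi)_{\nu_h}Z=\tfrac{1}{\kappa}Z$. The one genuine difference is how the two arguments pass from $\escpr{(d\pi)_{\nu_h}(\nabla_Z\nu_h),Z}$ to $\tfrac{1}{\kappa}\escpr{\nabla_Z\nu_h,Z}$. The paper transposes $d\pi_K$ across the inner product, which requires knowing that $(d\pi)_{\nu_h}$ is self-adjoint; this is justified there by writing $\pi_K=\nabla h$ for a $C^2$ function $h$ (Schneider, Corollary~1.7.3) so that $d\pi_K$ is a Hessian. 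You instead observe that the unit-norm constraint $a^2+b^2=1$ forces $\nabla_Z\nu_h$ to be orthogonal to $\nu_h$ in the two-dimensional horizontal plane, hence $\nabla_Z\nu_h=H_D\,Z$, after which you apply $(d\pi)_{\nu_h}$ directly to $Z$ and never need its symmetry. This is a small but real simplification: it removes the dependence on the gradient representation of $\pi_K$ and on Schwarz's theorem, at no cost. Your closing remarks correctly identify the two points that need care, namely that $\nu_h$ is $C^1$ along characteristics (Theorem~\ref{th:main}) so the chain rule applies, and that $N_K$ is a $C^1$ diffeomorphism with $\kappa>0$ so that $\pi_K$ is $C^1$, exactly as in Lemma~\ref{lemmaFK}.
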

\begin{proof}
By Proposition \ref{prop:KMCE} we have $H_K(p)=f(p)$ for each $p\in S\smallsetminus S_0 $. We remark that Theorem \ref{th:main} implies that $H_K$ is well-defined.

Let $\gamma: (-\eps,\eps) \to S \smallsetminus S_0 $ be the integral  curve of $Z$ passing through $p$, namely  $\ga'(s)=Z_{\ga(s)}$ and $\ga(0)=p$. Let $\nu_h (s)=-J(Z_{\ga(s)})$ be the horizontal unit  normal along $\gamma$ and let 
\[
\pi(\nu_h(s))= \pi_1(\nu_h(s)) X_{\ga(s)} + \pi_2(\nu_h(s)) Y_{\ga(s)}.
\]
Noticing that $\nabla X=\nabla Y=0$
we gain
\[
\dfrac{\nabla}{ds}\Big|_{s=0} \pi(\nu_h(s))= \dfrac{d}{ ds}\Big|_{s=0} \pi_1(\nu_h(s)) X_{\ga(0)} + \dfrac{d}{ ds}\Big|_{s=0} \pi_2(\nu_h(s)) Y_{\ga(0)}.
\]
Setting $\nu_h=a X+ b Y$   we obtain 
\begin{equation}
\label{eq:nablaofpi}
\dfrac{\nabla}{ds}\Big|_{s=0} \pi(\nu_h(s))= (d \pi)_{(a,b)} \left(\dfrac{\nabla}{ds}\Big|_{s=0} \nu_h (s) \right),
\end{equation}
where 
\[
 (d \pi)_{(a,b)}=\left(\begin{array}{cc} \dfrac{\partial \pi_1 }{\partial a}(a,b) & \dfrac{\partial \pi_1}{\partial b} (a,b)\\ [.25 cm] \dfrac{\partial \pi_2}{\partial a} (a,b) & \dfrac{\partial \pi_2}{\partial b} (a,b) \end{array} \right).
\]
Moreover, by Corollary 1.7.3 in \cite{MR3155183} we get $\pi_K= \nabla h$, where $h$ is a $C^2$ function. Thus by Schwarz's theorem  the Hessian $\text{Hess}_{(a,b)}(h)=(d \pi)_{(a,b)}$ is symmetric, i.e. $(d \pi)=(d \pi)^*$. Equation \eqref{eq:nablaofpi} then implies
\[
H_K=\escpr{\nabla_Z \, \pi_K(\nu_h),Z}= \escpr{\nabla_Z \nu_h , (d \pi)_{\nu_h}^* Z}=  \escpr{\nabla_Z \nu_h , (d \pi)_{\nu_h} Z}.
\]
Finally, by Lemma \ref{lm:diff}  we get 
\[
H_K= \dfrac{1}{\kappa(\pi_K(\nu_h))} \escpr{\nabla_Z \nu_h , Z}.
\]
Hence we obtain  $\escpr{D_Z \nu_h , Z}=\kappa( \pi_K(\nu_h)) $, since $D_Z \nu_h=\nabla_Z \nu_h$.
\end{proof}

\begin{lemma}
\label{lm:diff}
Let $K \subset \rr^2$ be a convex body of class $C_+^2$ such that $0 \in \intt(K)$ and $N_K$ be the Gauss map of $\partial K$. Let $\kappa$ be the strictly positive curvature of the boundary $\partial K$. Let $S$ be an $\hh$-regular surface with horizontal unit normal $\nu_h$ and characteristic vector field $Z= J(\nu_h)$. Then we have
\[
(d \pi)_{\nu_h} Z= \dfrac{1}{\kappa} Z \quad \text{and} \quad  (d \pi)_{\nu_h} \nu_h=0,
\]
where $(d \pi)_{\nu_h} $ is the differential of $\pi_K= N_K^{-1}$.
\end{lemma}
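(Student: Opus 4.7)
The plan is to factor $\pi_K$ as the composition $\pi_K = N_K^{-1}\circ P$, where $P\colon \rr^2\smallsetminus\{0\}\to\sph^1$ is the radial projection $P(u)=u/|u|$, and then to compute the differentials of the two factors separately.

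First, a direct computation shows that at a unit vector $\nu_h\in\sph^1$ the differential of $P$ is the orthogonal projection onto $T_{\nu_h}\sph^1$:
\[
(dP)_{\nu_h}(v)=v-\escpr{v,\nu_h}\nu_h.
\]
In particular, $(dP)_{\nu_h}(\nu_h)=0$, which gives the second identity $(d\pi)_{\nu_h}\nu_h=0$ regardless of the value of $(dN_K^{-1})_{\nu_h}$. Moreover, since $Z=J(\nu_h)$ is orthogonal to $\nu_h$ inside the horizontal plane, $(dP)_{\nu_h}(Z)=Z$, so the first identity reduces to computing $(dN_K^{-1})_{\nu_h}(Z)$.

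Next I would parametrize $\partial K$ by arc length, $s\mapsto p(s)$, oriented so that $K$ lies on the left, with $p(0)=\pi_K(\nu_h)$. Let $T(s)=p'(s)$ be the unit tangent and $N(s)$ the outer unit normal, so that $T=J(N)$ (in particular $T(0)=J(\nu_h)=Z$, after the horizontal identification given by $X,Y$). The Frenet formulas together with positivity of the geodesic curvature yield $T'(s)=-\kappa(p(s))N(s)$, hence
\[
(dN_K)_{p(s)}(T(s))=N'(s)=-J(T'(s))=\kappa(p(s))\,T(s).
\]
Evaluating at $s=0$ and inverting gives $(dN_K^{-1})_{\nu_h}(Z)=\kappa(\pi_K(\nu_h))^{-1}Z$. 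Combining with the computation of $(dP)_{\nu_h}(Z)=Z$ from the previous step proves the first identity.

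The main obstacle is just orientation bookkeeping: one must carefully match the direction of $J$ (defined in the Heisenberg setting by $J(X)=Y$, $J(Y)=-X$) with the convention relating outer normal and tangent on $\partial K$, so that the positive factor $1/\kappa$ appears on the correct side. Once the convention is fixed as above, both identities follow mechanically from the chain rule and the classical Frenet formula for a strictly convex plane curve.
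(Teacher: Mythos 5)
Your proof is correct: it reaches both identities with the right sign and the right constant. The underlying computation is the same as the paper's --- everything reduces to the fact that the differential of the inverse Gauss map of a $C^2_+$ curve acts on the tangent line as multiplication by $1/\kappa$ --- but your organization is genuinely different and arguably cleaner. The paper works in explicit coordinates: it writes $\nu_h=(\cos\theta,\sin\theta)$, sets $\pi_K(\cos\theta,\sin\theta)=\alpha(\varphi(\theta))$ for an arc-length parametrization $\alpha=(x,y)$ of $\partial K$, identifies $\varphi$ as the inverse of $t\mapsto\arctan(-\dot{x}/\dot{y})$ with $\varphi'=1/\kappa$, and then differentiates $\pi_i(a,b)=\alpha_i(\varphi(\arctan(b/a)))$ by direct computation, leaving $(d\pi)_{\nu_h}\nu_h=0$ as ``a similar straightforward computation.'' Your factorization $\pi_K=N_K^{-1}\circ P$ through the radial projection buys two things: the second identity becomes an immediate consequence of the $0$-homogeneity of $\pi_K$, since $(dP)_{\nu_h}\nu_h=0$ regardless of $N_K$; and the factor $1/\kappa$ drops out of the Frenet/Weingarten relation $(dN_K)(T)=N'=\kappa T$ rather than out of an $\arctan$ manipulation. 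The orientation bookkeeping you flag is indeed the only delicate point, and your conventions ($J$ the counterclockwise rotation, $K$ on the left, $T=J(N)$ with $N$ the outer normal, $T'=-\kappa N$) are consistent with the paper's choice of outer normal $(\dot{y},-\dot{x})$, so the positive factor $1/\kappa$ lands on the correct side.
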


\begin{proof}
Let $\alpha(t)=(x(t),y(t))$ be an arc-length parametrization of $\partial K$ such that $\dot{x}^2(t)+ \dot{y}^2(t) =1$. Let $\nu_h=a X+ b Y$ be the horizontal unit normal to $S$, with $a=\cos(\theta)$ and $b=\sin(\theta)$ and $\theta \in (-\tfrac{\pi}{2},  \tfrac{\pi}{2})$. Notice that $\theta= \arctan(\tfrac{b}{a})$. Then we have  
\[
\pi_K(a,b)= N_K^{-1} ((a,b)).
\]
Let $\varphi:(-\tfrac{\pi}{2}, \tfrac{\pi}{2}) \to \rr $ be the function satisfying
\[
\pi_K(\cos(\theta), \sin(\theta))=(x (\varphi(\theta)), y(\varphi(\theta))).
\]
If we consider the normal $N_K$ of the previous equality we obtain
\[
(\cos(\theta), \sin(\theta))=(\dot{y}(\varphi(\theta)), -\dot{x} (\varphi(\theta))).
\]
Therefore we have 
\[
\theta=\arctan\left(-\dfrac{\dot{x}}{\dot{y}} (\varphi(\theta))\right)
\]
for each $\theta \in (-\tfrac{\pi}{2}, \tfrac{\pi}{2}) $. That means that $\varphi$ is the  inverse of the function $
\arctan(-\tfrac{\dot{x}}{\dot{y}} (t))$, that is invertible since 
\[
\dfrac{d}{dt}\arctan(-\tfrac{\dot{x}}{\dot{y}} (t))=\dot{x} \ddot{y}- \dot{y} \ddot{x}= \kappa(t)>0.
\]
Let $Z=J(\nu_h)=-bX+aY$ be the characteristic vector field, then we have $(d \pi)_{(a,b)}=(d \pi)_{(a,b)}^*$ and
\[
(d \pi)_{(a,b)}^* Z= \left( \begin{array}{c} -b \dfrac{\partial \pi_1 }{\partial a} + a\dfrac{\partial \pi_2 }{\partial a}\\ [.25 cm] -b \dfrac{\partial \pi_1 }{\partial b}+a\dfrac{\partial \pi_2 }{\partial b} \\ \end{array}\right),
\]
where 
\begin{align*}
\pi_1(a,b)= x(\varphi(\arctan(\tfrac{b}{a}))), \qquad \pi_2(a,b)=y(\varphi(\arctan(\tfrac{b}{a}))).
\end{align*}
Thus we get
\begin{align*}
(d \pi)_{(a,b)}^* Z=\varphi'(\arctan(\tfrac{b}{a}))) Z= \dfrac{1}{\kappa(\varphi(\theta))} Z.
\end{align*}
A similar straightforward computation shows that $(d \pi)_{\nu_h} \nu_h=0$.
\end{proof}

\bibliography{sub-finsler}

\end{document}